\def\NZQ{\Bbb}               
\def\NN{{\NZQ N}}
\def\ZZ{{\NZQ Z}}
\def\RR{{\NZQ R}}
\def\frk{\frak}               
\def\mm{{\frk m}}
\def\Phi{{\frk n}}
\def\Phi{{\frk N}}
\def\MP{{\mathcal P}}
\def\MV{{\mathcal V}}
\def\MF{{\mathcal F}}
\def\MT{{\mathcal T}}
\def\MH{{\mathcal H}}
\def\MS{{\mathcal S}}
\def\opn#1#2{\def#1{\operatorname{#2}}} 
\opn\chara{char} \opn\length{\ell} \opn\pd{pd} \opn\rk{rk}
\opn\projdim{proj\,dim} \opn\injdim{inj\,dim} \opn\rank{rank}
\opn\depth{depth} \opn\grade{grade} \opn\height{height}
\opn\embdim{emb\,dim} \opn\codim{codim}
\opn\Tr{Tr} \opn\bigrank{big\,rank}
\opn\superheight{superheight}\opn\lcm{lcm}
\opn\trdeg{tr\,deg}
\opn\reg{reg} \opn\lreg{lreg} \opn\ini{in} \opn\lpd{lpd}
\opn\size{size}\opn\bigsize{bigsize}
\opn\cosize{cosize}\opn\bigcosize{bigcosize}
\opn\sdepth{sdepth}\opn\sreg{sreg}
\opn\link{link}\opn\fdepth{fdepth}
\opn\div{div} \opn\Div{Div} \opn\cl{cl} \opn\Cl{Cl}
\opn\Spec{Spec} \opn\Supp{Supp} \opn\supp{supp} \opn\Sing{Sing}
\opn\Ass{Ass} \opn\Min{Min}\opn\Mon{Mon} \opn\dstab{dstab} \opn\astab{astab}
\opn\Ann{Ann} \opn\Rad{Rad} \opn\Soc{Soc}
\opn\Im{Im} \opn\Ker{Ker} \opn\Coker{Coker} \opn\Am{Am}
\opn\Hom{Hom} \opn\Tor{Tor} \opn\Ext{Ext} \opn\End{End}
\opn\Aut{Aut} \opn\id{id}
\opn\nat{nat}
\opn\pff{pf}
\opn\Pf{Pf} \opn\GL{GL} \opn\SL{SL} \opn\mod{mod} \opn\ord{ord}
\opn\Gin{Gin} \opn\Hilb{Hilb}\opn\sort{sort}
\opn\aff{aff} \opn\con{conv} \opn\relint{relint} \opn\st{st}
\opn\lk{lk} \opn\cn{cn} \opn\core{core} \opn\vol{vol}
\opn\link{link} \opn\star{star}\opn\lex{lex}
\opn\gr{gr}
\def\Rees{{\mathcal R}}
\def\pot#1#2{#1[\kern-0.28ex[#2]\kern-0.28ex]}
\opn\dirlim{\underrightarrow{\lim}}
\opn\inivlim{\underleftarrow{\lim}}
\let\union=\cup
\let\sect=\cap
\let\dirsum=\oplus
\let\iso=\cong
\let\Union=\bigcup
\let\Sect=\bigcap
\let\to=\rightarrow
\let\To=\longrightarrow
\def\Implies{\ifmmode\Longrightarrow \else
        \unskip${}\Longrightarrow{}$\ignorespaces\fi}
\def\implies{\ifmmode\Rightarrow \else
        \unskip${}\Rightarrow{}$\ignorespaces\fi}
\def\iff{\ifmmode\Longleftrightarrow \else
        \unskip${}\Longleftrightarrow{}$\ignorespaces\fi}
\newtheorem{Theorem}{Theorem}[section]
\newtheorem{Lemma}[Theorem]{Lemma}
\newtheorem{Corollary}[Theorem]{Corollary}
\newtheorem{Proposition}[Theorem]{Proposition}
\newtheorem{Remark}[Theorem]{Remark}
\newtheorem{Example}[Theorem]{Example}
\newtheorem{Algorithm}[Theorem]{Algorithm}
\let\epsilon\varepsilon
\let\kappa=\varkappa
\def\qed{\ifhmode\textqed\fi
      \ifmmode\ifinner\quad\qedsymbol\else\dispqed\fi\fi}
\def\textqed{\unskip\nobreak\penalty50
       \hskip2em\hbox{}\nobreak\hfil\qedsymbol
       \parfillskip=0pt \finalhyphendemerits=0}
\def\dispqed{\rlap{\qquad\qedsymbol}}
\opn\dis{dis}
\def\pnt{{\raise0.5mm\hbox{\large\bf.}}}
\opn\Lex{Lex}
\begin{document}

\title {The stable set of associated prime ideals of a polymatroidal ideal }
\author {J\"urgen Herzog, Asia Rauf and Marius Vladoiu}
\address{J\"urgen Herzog, Fachbereich Mathematik, Universit\"at Duisburg-Essen, Campus Essen, 45117
Essen, Germany} \email{juergen.herzog@uni-essen.de}

\address{Asia Rauf, The Abdus Salam International Centre for Theoretical Physics (ICTP), Trieste-Italy.} \email{arauf@ictp.it, asia.rauf@gmail.com}

\address{Marius Vladoiu, Faculty of Mathematics and Computer Science, University of Bucharest, Str.
 Academiei 14, Bucharest, RO-010014, Romania}\email{vladoiu@gta.math.unibuc.ro}

\subjclass{13C13, 13A30, 13F99,  05E40}
\keywords{Associated prime ideals, polymatroidal ideals, analytic spread}
\begin{abstract}
The associated prime ideals of powers of polymatroidal ideals are studied, including the stable set of associated prime ideals of this class of ideals. It is shown that polymatroidal ideals have the persistence property and  for transversal polymatroids and polymatroidal ideals of Veronese type  the index of stability and the stable set of associated ideals is determined explicitly.
\end{abstract}
\maketitle

\section*{Introduction}
Let $I$ be an ideal in a Noetherian ring $R$. It is customary to denote by $\Ass(I)$ the set of associated prime ideals of $R/I$. Brodmann \cite{Br2} showed that $\Ass(I^k)=\Ass(I^{k+1})$ for all $k\gg 0$. One calls the smallest number $k_0$ for which this happens the {\em index of  stability} and  $\Ass(I^{k_0})$ is called  the {\em stable set of associated prime ideals} of $I$. It is denoted by $\Ass^\infty(I)$. Several natural questions arise in the context of Brodmann's theorem.

(1) Is there an upper bound for the index of stability depending only on $R$?

(2) What can be said about the set $\Ass^\infty(I)$? Can $\Ass^\infty(I)$ be computed in case that $R$ is a polynomial ring and $I$ is a graded  ideal?

(3) Is it true that $\Ass(I)\subset \Ass(I^2)\subset \ldots \subset \Ass(I^k)\subset\ldots$?

All these questions are widely open, even for monomial ideals, though in several interesting special cases, including edge ideals and vertex cover ideals, these questions have been answered quite comprehensively, see \cite{CMS}, \cite{FHV} and \cite{MMV}. A nice survey on what is known about the stability of associated prime ideals of powers of edge ideals is given in \cite{MV}. Question (3) doesn't have a positive answer in general, see \cite{HH1} and \cite{MMV} for counterexamples. The ideals which provide these counterexamples are monomial ideals, but not squarefree. An ideal $I$ for which (3) holds true is said to satisfy the {\em persistence  property}. It is an open question whether all squarefree monomial ideals satisfy the persistence property.

Suppose now that $(R,\mm)$ is local or a standard graded $K$-algebra with graded maximal ideal $\mm$. We say that an ideal $I\subset R$ has  {\em  non-increasing depth functions}, if for all  prime ideals $P$ in the support $V(I)$ of $R/I$, one has that  $\depth R_P/I^kR_P$ is a non-increasing function of $k$, and  $I$ is said to have  {\em  strictly decreasing depth functions}, if the depth functions of all its localizations are strictly decreasing until they reach their limit value.  In the case that $I$ is a graded  ideal, respectively a monomial ideal, we require the defining property of non-increasing (strictly decreasing)  depth functions  only for localizations with respect to prime ideals $P\in V^*(I)$, where $V^*(I)$ denotes  the set of   graded, respectively  monomial prime ideals containing $I$.

It is easily seen  that for an ideal which has non-increasing depth functions   the persistence property holds, see Proposition~\ref{basis}.  Moreover, if an ideal has strictly decreasing depth functions, then its index of stability is bounded by $\dim R-1$.  We do not know of any example of a squarefree monomial ideal which does not have non-increasing  depth functions. On the other hand it is shown in \cite[Theorem 4.1]{HH1} that, given any non-decreasing function $f\: \NN\to \NN$,  there exists a monomial ideal in a polynomial ring $S$ (with sufficiently many variables) such that $\depth S/I^k=f(k)$ for all $k$. This shows that among the monomial ideals, non-increasing depth functions  can be expected in general only for squarefree monomial ideals.

There is at least one case known to us in which  $\Ass^\infty(I)$ can be computed efficiently. Namely, if $I$ is a monomial ideal in a polynomial ring $S=K[x_1,\ldots,x_n]$ whose Rees algebra $\Rees(I)$ is Cohen-Macaulay. By a result of Huneke \cite{Hu} it follows that the associated graded ring of $I$ is Cohen--Macaulay, and this implies that $\lim_k\depth  S/I^k= n-\ell(I)$, where $\ell(I)$ denotes the analytic spread of $I$, that is, the Krull dimension of the fiber ring $\Rees(I)/\mm \Rees(I)$, see Eisenbud and Huneke \cite[Proposition 3.3]{EH}. This theorem allows us to identify the elements of $\Ass^\infty(I)$ in terms of the exponent matrix associated with the unique minimal monomial set $G(I)$ of generators of $I$. All this is explained in detail in Section 1. There we also define the invariants $\dstab(I)$ and $\astab(I)$. The first of them is the smallest integer $k$ with the property that $\depth I^k=\depth I^{\ell}$ for all $\ell\geq k$, while the second is the smallest integer with  $\Ass(I^k)=\Ass(I^{\ell})$ for all $\ell \geq k$. One may ask whether there is any relation between these numbers. At the end of Section 1 we show that either one may be smaller than the other.  However we show in Proposition~\ref{basis} that $\astab(I)$ is bounded below and above by local data of $\dstab$ provided it has non-increasing depth functions, which is for example the case if all powers of $I$ have a linear resolution, see Proposition~\ref{linear}. These facts are used to compute the index of stability for Stanley-Reisner  ideal  of the natural triangulation of the projective plane.

In Section 2  the strategies discussed in  Section 1 are applied to study the associated prime ideals of powers of polymatroidal ideals. Two   general properties of polymatroidal ideals are crucial: 1.\ all powers of polymatroidal ideals have a linear resolution, as shown in \cite[Theorem 5.3]{CH}, 2.\ localizations of polymatroidal ideals at monomial prime ideals are again polymatroidal, see Corollary~\ref{romanianstyle}. These two facts combined with Proposition~\ref{basis} immediately yield that polymatroidal ideals have the persistence property, see Proposition~\ref{persistence}. We recall in Theorem~\ref{normality} the result of Villarreal \cite[Proposition 3.11]{Vi}, which says that the Rees ring of a polymatroidal ideal is normal, and consequently Cohen--Macaulay. Applying then the Huneke-Eisenbud result, the limit depth of a polymatroidal ideal can be expressed by its analytic spread. From this one easily deduces an algorithm, described at the end of the section,  to compute $\Ass^\infty(I)$ for any polymatroidal ideal $I$. All data required to compute  $\Ass^\infty(I)$ are given by the exponent matrix of the minimal set of monomial generators of $I$.

In the remaining two sections we consider special classes of polymatroidal ideals where the questions concerning associated prime ideals of powers of ideals have  complete answers. The ideals considered in Section 3 are the polymatroidal ideals of transversal polymatroids. Algebraically speaking, ideals of this type are simply arbitrary (finite) products of monomial prime ideals. In \cite[Lemma 3.2]{CH} a primary decomposition of products of ideals generated by linear forms is given. However this decomposition is not at all irredundant and it is not easy to obtain an irredundant decomposition from that given in \cite[Lemma 3.2]{CH}.

Our first result (Lemma~\ref{unique}) asserts that the presentation of a transversal polymatroidal ideal as product of monomial prime ideals is unique. The key result of Section~3 is Theorem~\ref{maxgraphconnected} where it is shown that  the graded maximal ideal $\mm$ is associated to  the transversal polymatroidal ideal $I=P_{F_1}\cdots P_{F_r}$ if and only if  $\Union_{i=1}^r F_i=[n]$ and the intersection graph $G_I$ is connected. Here $G_I$ is the graph with vertex set  $\{1,\ldots,r\}$ and for which  $\{i,j\}$ is an edge of $G_I$ if and only if $F_i\sect F_j\neq\emptyset$. By using this result we conclude in Corollary~\ref{astab1} that $\Ass(I)=\Ass^\infty(I)$ for any transversal polymatroidal ideal. Furthermore we show in Theorem~\ref{asstransversal} that $\Ass(I)$ is determined by the trees of the graph $G_I$. As nice consequences of all this we classify  in Corollary~\ref{assprimes} all subsets $\MS=\{F_1,\ldots,F_r\}$ of  $2^{[n]}$ for which there exists a transversal polymatroidal ideal $I$ with $\Ass(I)=\{P_{F_1},\ldots,P_{F_r}\}$, and in Corollary~\ref{primarydecomp} we give an irredundant primary decomposition of all powers $I^k$ of $I$. We conclude this section with two results concerning the depth of $S/I$. In Theorem~\ref{depthtransversal} it is shown that $\depth S/I$ is essentially determined by the number of components of $G_I$, and  Corollary~\ref{limitdepthtran}  says that $\dstab(I)=1$. Thus for any transversal polymatroidal ideal, $\dstab(I)=\astab(I)=1$.

The situation for ideals  $I_{d;a_1,\ldots,a_n}$ of Veronese type, which is the class of polymatroidal ideals considered in Section 4, is completely different. Here $\Ass^\infty(I)=V^*(I)$, as shown in Proposition~\ref{stableassveronese}, and the invariant $\astab(I)$ can be any number between 1 and $n-1$ determined by an explicit formula given in terms of the numbers $d$ and $a_1,\ldots,a_n$, see Corollary~\ref{primeindex}. Moreover it is shown in Corollary~\ref{dstabveronese} that $\astab(I)=\dstab(I)$ and $\lim_{k\to\infty}\depth S/I^k$ and $\ell(I)$ are computed for any Veronese type ideal.

The common feature to transversal polymatroidal ideals and to ideals of Veronese type is that $\astab(I)=\dstab(I)$. It would be interesting to know whether this equality holds for any other polymatroidal ideal. As we have seen in Section 1, arbitrary monomial ideals, even when they are squarefree, do not satisfy this equality.

\section{Generalities about the depth and the associated primes of powers of an ideal}

 Let $(R,\mm)$ denote a Noetherian local ring or standard graded $K$-algebra with graded maximal ideal $\mm$,
 and  $I\subset R$ an ideal. In the graded case we assume that $I$
 is graded ideal.

 We are going to  relate the index of stability  and the persistence property of $I$ to the property of $I$ to have non-increasing depth functions. We say that $P\in V(I)$  is a {\em persistent prime ideal} of $I$, if whenever $P\in \Ass(I^k)$ for some exponent $k$, then $P\in \Ass(I^{k+1})$. If this happens to be so for $k$, then of course we have $P\in \Ass(I^{\ell})$ for all $\ell\geq k$. The ideal $I$ is said to have the {\em persistence property} if all prime ideals $P\in \Union_k\Ass(I^k)$ are persistent prime ideals.

 By a famous theorem of Brodmann \cite{Br1} it is known that $\depth R/I^k$ is constant for all $k\gg 0$. We call the smallest  number $k_0$ such that $\depth R/I^k=\depth R/I^{k_0}$ for all $k\geq k_0$, the {\em index of depth stability} of $I$, and denote this number by $\dstab(I)$.

 Brodmann also showed \cite{Br2} that there exists an integer $k_1$ such that $\Ass(I^k)=\Ass(I^{k_1})$  for all $k\geq k_1$. The smallest such number is called the {\em index of stability} of $I$. We denote this number by $\astab(I)$.

At the end of this section we show by examples that the  invariants $\dstab(I)$ and $\astab(I)$ are unrelated.  In other words, either one of these numbers may be smaller than the other one or they may also be equal. However we have

\begin{Proposition}
\label{basis}
{\em (a)} Suppose the depth function $\depth R/I^k$ is non-increasing, then $\mm$ is a persistent prime ideal.

{\em (b)} If $I$ has non-increasing depth functions, then $I$ satisfies the persistence property.

{\em (c)} $\max_{P\in\Ass^{\infty}(I)}\{\dstab(IR_P)\}\leq\astab(I)$. In addition,  if $I$ has non-increasing depth functions, then $\astab(I)\leq \max_{ P\in V(I)}\{\dstab(IR_P)\}$.
\end{Proposition}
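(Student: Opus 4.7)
The proofs all hinge on two elementary tools: the local characterization $\mm\in\Ass(S/J)\iff\depth_S(S/J)=0$ for a local ring $(S,\mm)$, and the localization identity $P\in\Ass(M)\iff PR_P\in\Ass(M_P)$, which converts each statement about associated primes into a statement about depth of the appropriate localization. With these in hand each part reduces to a short monotonicity argument on depth functions.

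For part (a), suppose $\mm\in\Ass(I^k)$; then $\depth R/I^k=0$, and since depths are nonnegative and $\depth R/I^\ell$ is assumed non-increasing in $\ell$, it stays $0$ for all $\ell\geq k$, so $\mm$ is persistent. Part (b) is then just a localization of (a): for any $P\in V(I)$ with $P\in\Ass(I^k)$ one passes to $R_P$, applies (a) to the local ring $R_P$ (whose depth function for $IR_P$ is non-increasing by hypothesis) to see $PR_P$ is persistent for $IR_P$, and delocalizes to obtain $P\in\Ass(I^{k+1})$.

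For (c) the first inequality is unconditional. Pick $P\in\Ass^\infty(I)$; by the definition of $\astab$, $P\in\Ass(I^\ell)$ for every $\ell\geq\astab(I)$, so $\depth R_P/I^\ell R_P=0$ for every such $\ell$. Hence the depth function of $IR_P$ is constant (equal to $0$) from index $\astab(I)$ onward, forcing $\dstab(IR_P)\leq\astab(I)$. For the second inequality, assume non-increasing depth functions and set $m=\max_{P\in V(I)}\dstab(IR_P)$; the plan is to verify $\Ass(I^k)=\Ass(I^m)$ for every $k\geq m$. The inclusion $\Ass(I^m)\subseteq\Ass(I^k)$ is the persistence from (b) iterated. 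Conversely, $P\in\Ass(I^k)$ means $\depth R_P/I^kR_P=0$, and since $k\geq m\geq\dstab(IR_P)$ the depth function of $IR_P$ is already constant on $[m,\infty)$, so its stable value must be $0$ and in particular $\depth R_P/I^mR_P=0$, whence $P\in\Ass(I^m)$.

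The arguments are all short; the main (mild) obstacle is bookkeeping around $\dstab(IR_P)$, which is measured against whatever the stable value of $\depth R_P/I^\bullet R_P$ happens to be. The observation that makes both halves of (c) click is that this stable value is forced to be $0$ precisely for $P\in\Ass^\infty(I)$, so any $P\in\Ass(I^k)$ with $k\geq m$ is automatically in $\Ass^\infty(I)$, and the reverse containment follows without further effort.
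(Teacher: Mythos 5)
Your argument is correct and follows essentially the same route as the paper: both reduce everything to the characterization $P\in\Ass(I^k)\iff\depth R_P/I^kR_P=0$, prove (b) by localizing (a), and get (c) from persistence together with the constancy of $\depth R_P/I^\ell R_P$ for $\ell\geq\dstab(IR_P)$. The only cosmetic difference is that for the second inequality in (c) you verify $\Ass(I^k)=\Ass(I^m)$ directly for $k\geq m$, whereas the paper runs the same ingredients as a proof by contradiction.
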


\begin{proof}
(a) Let $\mm\in \Ass(I^k)$, then $\depth R/I^k=0$. Thus our assumption implies that  $\depth R/I^\ell=0$ for all $\ell\geq k$. Hence $\mm\in Ass(I^\ell)$ for all $\ell\geq k$.

(b) One has that $P\in \Ass(I^k)$ if and only if $PR_P\in \Ass_{R_P}(I^kR_P)$. By part (a) this is implies that $PR_P\in \Ass_{R_P}(I^\ell R_P)$ for all $\ell\geq k$. Thus  $P\in \Ass(I^\ell)$ for all $\ell\geq k$.

(c) Let $r=\astab(I)$. Then,  whenever $P\in \Ass^\infty(I)$, we have  $P\in \Ass(I^\ell)$ for all $\ell\geq r$. This implies that $\depth R_P/I^\ell R_P=0$ for all $\ell\geq r$. Hence $\dstab(IR_P)\leq r$, which yields the first inequality.

Now let $s=\max_{P\in V(I)}\{\dstab(IR_P)\}$, and suppose that $r>s$. Then there exists $P\in \Ass^\infty(I)$ such that $P\in \Ass(I^r)$, but $P\not\in \Ass(I^s)$. Indeed, otherwise we would have that $\depth R_P/I^sR_P=0$ for all $P\in\Ass^\infty(I)$. Since $I$ has non-increasing depth functions it would follow that $\astab(I)\leq s<r$, a contradiction.

    It follows that $\depth R_P/I^sR_P>\depth R_P/I^rR_P=0$, in contradiction to the definition of $s$.
\end{proof}

The next result generalizes \cite[Proposition 2.1]{HH1} and provides cases where we have non-increasing depth functions.

\begin{Proposition}
\label{linear}
Suppose $I\subset S=K[x_1,\ldots,x_n]$ is a graded ideal generated in degree $d$ with the property that there exists an integer $k_0$ such that  $I^k$ has a linear resolution for all $k\geq k_0$. Then $\depth I^k\geq \depth I^{k+1}$ for all $k\geq k_0$.
\end{Proposition}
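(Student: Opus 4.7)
I would take the short exact sequence
$$
0 \longrightarrow I^{k+1} \longrightarrow I^k \longrightarrow I^k/I^{k+1} \longrightarrow 0
$$
and analyze it through the long exact sequence of $\Tor^S(K,-)$. Because $I^k$ has a $kd$-linear resolution and $I^{k+1}$ a $(k+1)d$-linear one, the graded $K$-vector space $\Tor_i^S(K,I^k)$ sits in the single internal degree $i+kd$ while $\Tor_i^S(K,I^{k+1})$ sits in the distinct single degree $i+(k+1)d$. A graded $K$-linear map between vector spaces concentrated in different internal degrees is zero, so the map $\Tor_i^S(K,I^{k+1})\to\Tor_i^S(K,I^k)$ induced by the inclusion is identically zero for every $i$, and the long exact Tor sequence degenerates into the short exact sequences
$$
0\longrightarrow\Tor_i^S(K,I^k)\longrightarrow\Tor_i^S(K,I^k/I^{k+1})\longrightarrow\Tor_{i-1}^S(K,I^{k+1})\longrightarrow 0.
$$
From these one reads off $\pd(I^k/I^{k+1})=\max(\pd I^k,\pd I^{k+1}+1)$, which by Auslander--Buchsbaum over the polynomial ring $S$ becomes the depth identity
$$
\depth(I^k/I^{k+1}) = \min\bigl(\depth I^k,\ \depth I^{k+1}-1\bigr).
$$

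Converting this depth identity into the required monotonicity $\depth I^k\ge \depth I^{k+1}$ is the main obstacle. Applying the depth lemma to the short exact sequence yields $\depth I^k\ge \min(\depth I^{k+1},\depth(I^k/I^{k+1}))$, which after substitution amounts to only $\depth I^k \ge \depth I^{k+1}-1$, off by one from the conclusion. Closing this gap requires going beyond numerical depths and using the \emph{graded} structure provided by the split Tor sequence. The plan is to assume for contradiction $\depth I^k<\depth I^{k+1}$, set $p=\pd I^k$, and exploit that $\Tor_p^S(K,I^k/I^{k+1})$ has a component in the ``linear'' internal degree $p+kd$ inherited from $\Tor_p^S(K,I^k)$. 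Translating this piece of information via local duality and comparing with the regularity bounds $\reg(I^{k+1})=(k+1)d$ and $\reg(I^k/I^{k+1})\le (k+1)d-1$ inside the long exact sequence of local cohomology attached to the original short exact sequence should force a graded incompatibility in the degrees where $H^{\depth I^k}_\mm(I^k/I^{k+1})$ and $H^{\depth I^k+1}_\mm(I^{k+1})$ can be simultaneously nonzero. Pinning down this graded contradiction is the technical heart of the argument; once it is secured, the hypothesis $\depth I^k<\depth I^{k+1}$ is ruled out and the monotonicity follows.
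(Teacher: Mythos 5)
Your first step is fine: for $k\ge k_0$ the Tor modules of $I^k$ and $I^{k+1}$ are concentrated in the single internal degrees $i+kd$ and $i+(k+1)d$, so the maps $\Tor_i(K,I^{k+1})\to\Tor_i(K,I^k)$ vanish and $\Tor_i(K,I^k/I^{k+1})\iso\Tor_i(K,I^k)\dirsum\Tor_{i-1}(K,I^{k+1})$. But, as you concede, this only yields data about $I^k/I^{k+1}$ and the weak bound $\depth I^k\ge\depth I^{k+1}-1$; the monotonicity itself is exactly the part you leave open, and the contradiction you sketch cannot be extracted from the ingredients you list. Everything you propose to use --- the sequence $0\to I^{k+1}\to I^k\to I^k/I^{k+1}\to 0$, the generation degrees, linearity of the two resolutions, the resulting regularity bounds, and the long exact sequence of local cohomology --- is blind to the multiplicative relation $I^{k+1}=I\cdot I^k$, and without that relation the desired inequality is false. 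For instance, in $S=K[x,y]$ take $B=(x,y)$ and $A=(x^2)\subset B$: $B$ is generated in degree $1$ with linear resolution and $\depth B=1$, $A$ is generated in degree $2$ with (trivially) linear resolution and $\depth A=2$, $\reg A=2$, $\reg(B/A)\le 1$, and the Tor sequence splits exactly as in your analysis --- yet $\depth B<\depth A$. So no graded bookkeeping with $H_{\mm}^{\bullet}$ applied to your sequence alone can rule out $\depth I^k<\depth I^{k+1}$.

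The missing idea, which is the paper's proof, is to bring a degree-$d$ element $f\in I$ into play: $fI^k\subset I^{k+1}$ and $fI^k\iso I^k(-d)$, so in the short exact sequence $0\to fI^k\to I^{k+1}\to I^{k+1}/fI^k\to 0$ the submodule and the middle term are both generated in degree $(k+1)d$ and both have $(k+1)d$-linear resolutions, while the quotient is generated in degree $(k+1)d$, so $\Tor_{i+1}(K,I^{k+1}/fI^k)$ vanishes in internal degree $i+(k+1)d$. Hence the connecting maps die and the maps $\Tor_i(K,fI^k)\to\Tor_i(K,I^{k+1})$ are \emph{injective} (rather than zero, as in your sequence), giving $\pd I^k=\pd fI^k\le\pd I^{k+1}$ and, by Auslander--Buchsbaum, $\depth I^k\ge\depth I^{k+1}$. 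In short, the comparison must be made between two modules generated in the same degree, which is precisely what multiplication by $f$ arranges and what your chosen sequence does not.
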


\begin{proof}
Let $f\in I$ be a homogeneous polynomial of degree $d$. Then $fI^k$ is generated in degree $(k+1)d$ and $fI^k\subset I^{k+1}$. The short exact sequence
\[
0\To fI^k \To I^{k+1}\To I^{k+1}/fI^k\To 0
\]
induces the long exact sequence
\[
\ldots \to\Tor_{i+1}(K,I^{k+1}/fI^k)_{i+1+(j-1)}\to \Tor_{i}(K,fI^k)_{i+j}\to \Tor_{i}(K,I^{k+1})_{i+j}\to\ldots ,
\]
where for a graded $S$-module, $\Tor_i(K,M)_j$ denotes the $j$th graded component of $\Tor_i(K,M)$.

Both $fI^k$ and $I^{k+1}$ have a $(k+1)d$-linear resolution. Thus  $$\Tor_{i}(K,fI^k)_{i+j}= \Tor_{i}(K,I^{k+1})_{i+j}=0$$ for $j\neq (k+1)d$ and all $i$. Moreover, $\Tor_{i+1}(K,I^{k+1}/fI^k)_{i+1+(j-1)}=0$ for $j=(k+1)d$, because the module $I^{k+1}/fI^k$ is generated in degree $(k+1)d$. This  shows that the natural maps
$\Tor_{i}(K,fI^k)\To \Tor_{i}(K,I^{k+1})$ are injective for all $i$. It follows that $\projdim I^k=\projdim fI^k\leq \projdim I^{k+1}$, and consequently,
$\depth S/I^{k+1}\leq \depth S/I^k$, by the Auslander-Buchsbaum formula (see for example \cite[Theorem 1.3.3]{BH}).
\end{proof}

Let $I\subset S$ be a monomial ideal. Throughout this paper $S$ stands for the polynomial ring $K[x_1,\ldots,x_n]$ where $K$ is a field. We denote by $G(I)$ the unique minimal set of monomial generators of $I$. In the case that $G(I)\subset T= K[x_{i_1},\ldots, x_{i_k}]$ we denote by an abuse of notation the ideal $G(I)T$ again by $I$. Observe that by using this notation it follows that $\Ass_S(I)=\Ass_T(I)$.

Let $u=\prod_{i\in L}x_i$ be a squarefree monomial in $S$. Then $$(S/I)_u\iso S'[\{x_j^{\pm 1}\:\; j\in L\}]/I_LS'[\{x_j^{\pm 1}\:\; j\in L\}],$$
where $S'=K[\{x_i\:\; i\notin L\}]$ and where $I_L\subset S'$  is the ideal which is obtained  from $I$ by applying  the $K$-algebra homomorphism $S\to S'$ with $x_i\mapsto 1$ for all $i\in L$.

\medskip
Let $P=(x_{i_1},\ldots,x_{i_r})$ be a monomial prime ideal, and $I\subset S$ any monomial ideal. We denote by $I(P)$ the monomial ideal in the polynomial ring $S(P) =K[x_{i_1},\ldots,x_{i_r}]$ where $I(P)=I_L$ with  $L=[n]\setminus \{i_1,\ldots, i_r\}$.

In the  later proofs  we need the following simple facts.

\begin{Lemma}
\label{localass}
Let $I\subset S$ be a monomial ideal. Then
\begin{enumerate}
\item[(a)]  $P\in \Ass(I)$ if and only if $\depth S(P)/I(P)=0$;
\item[(b)] $\Ass(I_L)=\{P\in \Ass(I)\:\; x_i\not\in P \text{ for all $i\in L$}\}$ for all subsets $L\subset [n]$.
\end{enumerate}
\end{Lemma}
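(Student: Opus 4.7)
The plan is to prove (b) first, since the standard localization machinery gives it essentially for free, and then deduce (a) as an immediate specialization.

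For (b), I would start from the isomorphism
\[
(S/I)_u \iso S'[\{x_j^{\pm 1}\:\; j\in L\}]/I_L S'[\{x_j^{\pm 1}\:\; j\in L\}]
\]
that the authors recalled in the paragraph immediately preceding the lemma (with $u=\prod_{i\in L}x_i$ and $S'=K[\{x_i\:\; i\notin L\}]$). I would then combine two standard facts. First, the localization of a finitely generated module sets up a bijection between $\Ass_{S_u}((S/I)_u)$ and $\{P\in \Ass_S(S/I)\:\; u\notin P\}$, via $P\mapsto PS_u$; since $u=\prod_{i\in L}x_i$, the condition $u\notin P$ is equivalent to $x_i\notin P$ for every $i\in L$. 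Second, $S_u=S'[\{x_j^{\pm 1}\}]$ is a faithfully flat $S'$-algebra, and for every prime $Q\subset S'$ the extension $QS_u$ remains prime (because $S_u/QS_u=(S'/Q)[\{x_j^{\pm 1}\}]$ is a domain); the usual behavior of $\Ass$ under faithfully flat extensions therefore gives a bijection between $\Ass_{S'}(S'/I_L)$ and $\Ass_{S_u}(S_u/I_L S_u)$ sending $Q$ to $QS_u$. Composing the two bijections and unwinding the abuse of notation that identifies a monomial prime $Q\subset S'$ with the monomial prime of $S$ generated by the same variables (as explained just above the lemma, which also ensures $\Ass_{S'}(I_L)=\Ass_S(I_L S)$) yields the required equality of sets in (b).

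For (a), I would apply (b) with $L=[n]\setminus\{i_1,\ldots,i_r\}$, so that $I_L=I(P)$ and $S'=S(P)$. The monomial prime $P=(x_{i_1},\ldots,x_{i_r})$ automatically satisfies $x_i\notin P$ for all $i\in L$, so (b) gives
\[
P\in \Ass(I)\iff P\in \Ass(I(P)).
\]
But in $S(P)$ the prime $P$ is the graded maximal ideal, and membership of the maximal ideal in $\Ass$ is equivalent to depth zero; hence $P\in \Ass(I(P))$ iff $\depth S(P)/I(P)=0$, which is (a).

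The only real subtlety is the bookkeeping surrounding the notational identification of monomial primes across $S$, $S'$, and $S_u$; once one checks that $\Ass_{S'}(S'/I_L)$, $\Ass_{S_u}(S_u/I_L S_u)$, and the set on the right hand side of (b) are all matched by the natural prime correspondences of flat base change and of localization, the proof is a short assembly. No commutative-algebraic ingredient beyond faithful flatness of Laurent extensions and the standard localization formula for associated primes is needed.
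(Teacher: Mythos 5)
Your argument for (b) is correct and essentially the paper's own proof: the paper likewise passes to $T=S_u=S'[\{x_j^{\pm 1}\colon j\in L\}]$ and invokes the standard behaviour of associated primes under localization at $u=\prod_{i\in L}x_i$ and under the flat Laurent extension $S'\to T$, with $P\mapsto PT$ furnishing exactly the two bijections you describe. The only divergence is in (a), where the paper simply cites \cite[Lemma 2.11]{FHV}, whereas you deduce it from (b) by taking $L=[n]\setminus\{i_1,\ldots,i_r\}$ and using that the graded maximal ideal of $S(P)$ lies in $\Ass(S(P)/I(P))$ precisely when $\depth S(P)/I(P)=0$; this is a correct and slightly more self-contained route, and the bookkeeping you flag is harmless because all associated primes of a monomial ideal are monomial primes.
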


\begin{proof}
(a) has been observed in \cite[Lemma 2.11]{FHV}.

(b) As before let $S'=K[\{x_i\:\; i\notin L\}]$ and set $T=S'[\{x_j^{\pm 1}\:\; j\in L\}]$. Then $T=S_u$ where $u=\prod_{i\in L}x_i$. Thus by using the basic  rules concerning the behaviour of associated prime ideals with respect to localization and polynomial ring extension we obtain
\[
\Ass_T(I_LT)=\Ass_T(IT)=\{PT\:\; P\in \Ass_S(I),  x_i\notin P  \text{ for all $i\in L$}\}.
\]
On the  other hand,
\[
\Ass_T(I_LT)=\{PT\:\; P\in \Ass_S(I_L)\}.
\]
Since the assignment $P\mapsto PT$ establishes a bijection between the  set $\Ass_{S'}(I_L)$ and $\{PT\:\; P\in \Ass_S(I_L)\}$, the desired conclusion follows.
\end{proof}

If $I$ is a   monomial ideal, we  say that  $I$ has  non-increasing depth functions if $\depth S(P)/I(P)^k$ is a non-increasing function of $k$ for all $P\in V^*(I)$.

Since the associated prime ideals of a monomial  ideal  are monomial prime ideals, it follows (in analogy to Proposition~\ref{basis}(b))  that a monomial ideal has the persistence property if   $I$ has non-increasing depth functions as  defined for monomial ideals.

For monomial ideals, the corresponding statement of Proposition~\ref{basis}(c)  reads as follows:

\begin{Proposition}
\label{newread}
Let $I\subset S$ be a monomial ideal which has non-increasing depth functions. Then  $$\max_{P\in\Ass^{\infty}(I)}\{\dstab(I(P))\}\leq \astab(I)\leq \max_{P\in V^*(I)}\{\dstab(I(P))\}.$$
In particular, if $\Ass^\infty(I)=V^*(I)$, one has $\astab(I)= \max_{P\in V^*(I)}\{\dstab(I(P))\}$.
\end{Proposition}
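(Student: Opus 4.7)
The plan is to mirror the argument of Proposition~\ref{basis}(c), replacing localization at a monomial prime $P$ by the monomial operation $I\mapsto I(P)$, with Lemma~\ref{localass}(a) providing the bridge between the two viewpoints. The key identity underpinning the translation is $I^\ell(P)=I(P)^\ell$, which follows at once from the description of $I(P)$ as the image of $I$ under the substitution $x_i\mapsto 1$ for $i\notin\{i_1,\dots,i_r\}$, applied to a generating set of $I^\ell$. Combined with Lemma~\ref{localass}(a), this yields
\[
P\in \Ass(I^\ell)\iff \depth S(P)/I(P)^\ell = 0
\]
for every monomial prime $P$ and every $\ell\geq 1$, while $\Ass(I^\ell)\subseteq V^*(I)$ throughout, because monomial ideals have only monomial associated primes.

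For the first inequality, I would fix $P\in\Ass^\infty(I)$, set $r=\astab(I)$, and note that $P\in\Ass(I^\ell)$ for every $\ell\geq r$. The equivalence above turns this into $\depth S(P)/I(P)^\ell = 0$ for all $\ell\geq r$, whence $\dstab(I(P))\leq r$; taking the maximum over $P\in\Ass^\infty(I)$ delivers the lower bound.

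For the upper bound, set $s=\max_{P\in V^*(I)}\{\dstab(I(P))\}$ and take an arbitrary $P\in V^*(I)$. By the choice of $s$ together with the non-increasing depth hypothesis, the sequence $\ell\mapsto\depth S(P)/I(P)^\ell$ is both non-increasing and constant from $\ell=s$ onwards; consequently, for every $\ell\geq s$ it is either identically zero or identically positive. Translating back via Lemma~\ref{localass}(a), either $P\in\Ass(I^\ell)$ for every $\ell\geq s$ or $P\notin\Ass(I^\ell)$ for every $\ell\geq s$. Coupled with $\Ass(I^\ell)\subseteq V^*(I)$, this forces $\Ass(I^\ell)=\Ass(I^s)$ for all $\ell\geq s$, and hence $\astab(I)\leq s$. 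The final ``in particular'' clause is then immediate: when $\Ass^\infty(I)=V^*(I)$, the two bounds coincide.

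I do not expect any substantive obstacle. The one minor subtlety is the dichotomy ``identically $0$ or identically positive'' used in the second inequality, but once the depth sequence is recorded as both non-increasing and eventually constant, this is a one-line observation. The entire argument is essentially the monomial transcription of Proposition~\ref{basis}(c), with $I(P)$ in place of $IR_P$ and Lemma~\ref{localass}(a) doing the work that, in the local setting, was done tautologically by the definition of an associated prime.
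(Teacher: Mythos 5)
Your proof is correct, and it follows the route the paper intends: the paper states Proposition~\ref{newread} without a separate proof, as the monomial transcription of Proposition~\ref{basis}(c), which is exactly your plan -- $I(P)$ in place of $IR_P$, Lemma~\ref{localass}(a) together with $I^\ell(P)=I(P)^\ell$ providing the equivalence $P\in\Ass(I^\ell)\iff\depth S(P)/I(P)^\ell=0$, and $\Ass(I^\ell)\subseteq V^*(I)$ because associated primes of monomial ideals are monomial. The one place you genuinely deviate is the upper bound: the paper's model argument in Proposition~\ref{basis}(c) runs by contradiction and invokes persistence (hence the non-increasing depth hypothesis) to know $\Ass(I^\ell)\subseteq\Ass^\infty(I)$, whereas your direct dichotomy -- for each $P\in V^*(I)$ the depth sequence is constant from $s=\max_{P\in V^*(I)}\dstab(I(P))$ onwards simply by the definition of $\dstab$, hence membership of $P$ in $\Ass(I^\ell)$ is the same for all $\ell\geq s$, so $\Ass(I^\ell)=\Ass(I^s)$ -- never uses the non-increasing hypothesis at all (only the finiteness of $V^*(I)$, which guarantees $s$ exists). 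So your argument is a slight strengthening: both inequalities of Proposition~\ref{newread} hold for any monomial ideal, with the hypothesis only relevant elsewhere (e.g.\ for persistence); the paper's phrasing buys uniformity with the local statement of Proposition~\ref{basis}(c), while yours buys a cleaner, hypothesis-free proof of the bound on $\astab(I)$.
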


As a consequence, in the case of a monomial ideal which has non-increasing depth functions we need to compute the depth stability only for a finite number of monomial prime ideals in order to obtain bounds for its index of stability. The following example demonstrates this strategy.

\medskip
Let $I$ be the Stanley-Reisner ideal that corresponds to the natural triangulation of the projective plane. Then
\[
I = (x_1x_2x_3, x_1x_2x_4, x_1x_3x_5, x_1x_4x_6, x_1x_5x_6, x_2x_3x_6, x_2x_4x_5, x_2x_5x_6, x_3x_4x_5,
x_3x_4x_6).
\]
The following table displays in the $j$th row and the $k$th column the depth of $S(P)/I(P)^k$ where  $P\in V^*(I)$  is of height $j$.
\medskip
\begin{center}
\begin{tabular}{l |*{4}{c}r}
I           & 1 & 2 & 3 & 4 \\
\hline
3           & 0 & 0 & 0 & 0 \\
4           & 1 & 1 & 1 & 1 \\
5           & 2 & 2 & 0 & 0 \\
6           & 3 & 0 & 0 & 0 \\
\end{tabular}
\end{center}
The ideal $I$ is of height 3, so $\depth  S(P)/I(P)^k=\dim S(P)/I(P)^k=0$ for all $k$ and all $P\in V^*(I)$ of height $3$. For $P\in  V^*(I)$  of height $4$ the ideal $I(P)$ is a monomial ideal complete intersection of height $3$. Therefore $\depth S(P)/I(P)^k=1$ for all $k\geq 1$. This explains the first two rows of the table. If $P\in  V^*(I)$ is of height $5$, then $I(P)$ is the edge ideal of a $5$-cycle. It follows from \cite[Lemma 3.1]{CMS} that $\Ass^{\infty}(I(P))=\Ass(I(P))\union\{\mm\}$ and $\astab(I(P))=3$, which explains the third row of the table. In particular, we have that $\depth S(P)/I(P)^k=0$ for all $P\in V^*(I)$ of height $5$ and for all $k\geq 3$. Finally, by using CoCoA \cite{C} we find that $\depth S/I=3$, $\depth S/I^2=0$ and  $\depth S/I^3=0$. Borna \cite[Corollary 3.3]{Bo} has shown that $I^k$ has a linear resolution for $k\geq 3$ and when $\chara(K)=0$. Applying Proposition~\ref{linear}, we see that $\depth S/I^k=0$ for all $k\geq 3$. It follows also that $I$ is an ideal with non-increasing depth functions and consequently $I$ satisfies the persistence property, by Proposition~\ref{basis}(b). By applying the second inequality of Proposition~\ref{newread} we obtain that $\astab I\leq 3$. By using Singular \cite{S}, we find that all prime ideals of height $5$ are in $\Ass(I^3)$. Since $I$ satisfies persistence property we get that all prime ideals of height $5$ are in $\Ass^{\infty}(I)$. It follows then that $\max_{P\in\Ass^{\infty}(I)}\{\dstab(IS_P)\}\geq 3$. Finally, by applying again Proposition~\ref{newread} we obtain that $\astab(I)=3$. As a byproduct of computing the $\astab(I)$ we obtain that $\Ass^{\infty}(I)=\Ass(I^3)$. Calculations with Singular show that $\Ass(I^3)$ consists of all prime ideals of height $3,5$ and $6$ which belong to $V^*(I)$, altogether $17$. Moreover we see that in this example, $\dstab(I)< \astab(I)$.

\medskip
As a second example we consider the ideal $I=(xyz,ytu,xzv,tuv,xtv)\subset S=K[x,y,z,t,u,v]$. Then $I=\Sect_{F\in \mathcal{F}(\Delta)}P_F$ where $\Delta$ is the simplicial complex with facets $\mathcal{F}=\{\{z,t\}, \{x,t\}, \{x,u\}, \{y,v\}, \{z,u,v\}\}$ and where $P_F$ is the monomial prime ideal whose generators correspond to the vertices of $F$. The simplicial complex $\Delta$ has no special odd cycles in the sense of \cite{HHTZ}. Thus as a consequence of \cite[Theorem 2.2.]{HHTZ} it follows that the vertex cover algebra of $\Delta$ is standard graded which implies that $\Ass(I)=\Ass^\infty(I)$. Thus $\astab(I)=1$. On the other hand one can check with CoCoA that $\depth S/I = \depth S/I^2=3$ and $\depth S/I^3=2$ . Thus $\astab(I)<3\leq \dstab(I)$.

\medskip
As a last topic of this section we want to recall a few facts about the limit depth of an ideal. As we mentioned already, the function $f(k)=\depth R/I^k$ is constant for $k\gg 0$. We call  $\lim_{k\to \infty}\depth R/I^k$ the {\em limit depth of $I$}, see \cite{HH1}. This limit depth can be computed under certain conditions that we are going to describe now.

Recall that the {\em analytic spread} of an ideal $I$ is the Krull dimension of the fiber ring $\mathcal{R}(I)/\mm \mathcal{R}(I)$. It is known by Brodmann \cite{Br1}  that $$\lim_{k\to \infty}\depth R/I^k\leq n-\ell(I).$$ Thus in particular, if the analytic spread of $I$ is equal to $n$, then $\lim_{k\to \infty}\depth R/I^k=0$.
Eisenbud and Huneke \cite[Proposition 3.3]{EH} showed that equality holds in Brodmann's inequality if the associated graded ring $\gr_I(R)$ is Cohen--Macaulay, which by Huneke \cite{Hu} is the case  if $R$ and $\mathcal{R}(I)$ are Cohen--Macaulay.

In the case that $I$ is a monomial ideal generated in a single degree, the analytic spread of $I$ is the rank  of the integer matrix whose rows correspond to the monomial generators of  $I$.

\section{Polymatroidal ideals and the persistence property}

Discrete polymatroids were introduced in \cite{HH2} and represent a natural generalization of matroids. In the following we recall some basic facts about discrete polymatroids (for more details see \cite{HH2},\cite{HH}).

Let  $\varepsilon_1,\ldots,\varepsilon_n$ denote the canonical basis vectors of $\RR^n$. Let $\RR_{+}^n$ denote the set of vectors $u=(u(1),\ldots,u(n))\in\RR^n$ with each $u(i)\geq 0$. If $u=(u(1),\ldots,u(n))$ and $v=(v(1),\ldots,v(n))$ are two vectors belonging to $\RR_{+}^n$, then we write $u\leq v$ if all components $v(i)-u(i)$ of $v-u$ are nonnegative.  Moreover, we write $u<v$ if $u\leq v$ and $u\neq v$. The modulus of $u=(u(1),\ldots,u(n))\in\RR_{+}^n$ is $|u|=u(1)+\cdots+u(n)$.  Also, let $\ZZ_{+}^n=\RR_{+}^n\sect\ZZ^n$.

A {\em discrete polymatroid} on the ground set $[n]$ is a nonempty finite set $\MP\subset\ZZ_{+}^n$ satisfying the following conditions:
\begin{enumerate}
\item[(1)] if $u\in\MP$ and $v\in\ZZ_{+}^n$ with $v\leq u$, then $v\in\MP$;
\item[(2)] if $u=(u(1),\ldots,u(n))\in\MP$ and $v=(v(1),\ldots,v(n))\in\MP$ with $|u|<|v|$, then there is $i\in [n]$ with $u(i)<v(i)$ such that $u+\varepsilon_i\in\MP$.
\end{enumerate}
A {\em base} of $\MP$ is a vector $u\in\MP$ such that $u<v$ for no $v\in\MP$. The set of all bases of $\MP$ is denoted by $B(\MP)$. It follows from $(2)$ that if $u_1$ and $u_2$ are bases of $\MP$, then $|u_1|=|u_2|$. The modulus of any base of $\MP$ is called the {\em rank} of $\MP$ and denoted by $\rank\MP$. For later proofs it is very useful to have the following characterization of discrete polymatroids.

Let $\MP$ be a nonempty finite set of integer vectors in $\RR_{+}^n$ which contains with each $u\in\MP$ all its integral subvectors, that is, vectors $v$ with $v\leq u$, and let $B(\MP)$ be
the set of vectors $u\in\MP$ with $u<v$ for no $v\in\MP$. Then (see \cite[Theorem 12.2.4]{HH}) $\MP$ is a discrete polymatroid with $B(\MP)$ its set of bases if and only if the following are satisfied
\begin{enumerate}
\item[(i)] all $u\in B(\MP)$ have the same modulus;
\item[(ii)] if $u=(u(1),\ldots,u(n))\in B(\MP)$ and $v=(v(1),\ldots,v(n))\in B(\MP)$ with $u(i)>v(i)$, then there is $j\in [n]$ with $u(j)<v(j)$ such that $u-\varepsilon_i+\varepsilon_j\in B(\MP)$.
\end{enumerate}

Let $\MP$ be a discrete polymatroid on $[n]$ with $B(\MP)$ the set of bases. The {\em polymatroidal ideal} $I$ attached to $\MP$ is the monomial ideal of $S=K[x_1,\ldots,x_n]$ whose set of minimal monomial generators is the set  $G(I)=\{x^u\:\; u\in B(\mathcal{P})\}$. Observe that $I$ is generated in degree $\rank \MP$.

Recall from Section 1 that for any monomial ideal $I\subset S$ and any $i\in [n]$ we have
\[
I_{x_i}=I_{\{i\}}S_{x_i},
\]
where $I_{\{i\}}\subset S_{\{i\}}=K[x_1,\ldots,x_{i-1},x_{i+1},\ldots,x_n]$ is the monomial ideal which is obtained from $I$ by applying the substitution $x_i\mapsto 1$.

\begin{Proposition}
\label{polymlocalization}
Let $I\subset S$ be a polymatroidal ideal. Then for every $i\in [n]$ the ideal   $I_{\{i\}}$  is again polymatroidal.
\end{Proposition}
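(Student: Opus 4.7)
The plan is to exhibit $G(I_{\{i\}})$ explicitly as the set of minimal monomial generators attached to a derived polymatroid on $[n]\setminus\{i\}$. Set $d=\rank\MP$ and $m_i=\max\{u(i)\:\; u\in B(\MP)\}$, and for $u\in\ZZ_+^n$ write $u'\in\ZZ_+^{n-1}$ for the vector obtained by deleting the $i$-th entry. I will show that
\[
G(I_{\{i\}})=\{x^{u'}\:\; u\in B(\MP),\ u(i)=m_i\},
\]
and that the set $B':=\{u'\:\; u\in B(\MP),\ u(i)=m_i\}\subset\ZZ_+^{n-1}$ satisfies the base characterization axioms (i) and (ii) recalled above the statement; this identifies $I_{\{i\}}$ as the polymatroidal ideal attached to the discrete polymatroid on $[n]\setminus\{i\}$ with base set $B'$.

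The main obstacle is the following reduction lemma: for every $u\in B(\MP)$ there exists $w\in B(\MP)$ with $w(i)=m_i$ and $w(k)\leq u(k)$ for all $k\neq i$. I would prove it by choosing any $v\in B(\MP)$ with $v(i)=m_i$ and iteratively improving $v$: as long as some coordinate $k\neq i$ satisfies $v(k)>u(k)$, axiom (ii) applied to $v,u\in B(\MP)$ produces $\ell$ with $v(\ell)<u(\ell)$ such that $v-\varepsilon_k+\varepsilon_\ell\in B(\MP)$. The maximality of $m_i$ forces $\ell\neq i$, so the $i$-th coordinate stays at $m_i$ while the nonnegative integer $\sum_{k\neq i}\max(v(k)-u(k),0)$ strictly decreases at each step; the procedure terminates at the desired $w$.

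Given the lemma, the description of $G(I_{\{i\}})$ follows at once. If $u(i)<m_i$, the $w$ produced above yields a monomial $x^{w'}$ strictly dividing $x^{u'}$ in the smaller polynomial ring, so $x^{u'}$ is not a minimal generator of $I_{\{i\}}$. Conversely, if $u(i)=m_i$ and $v\in B(\MP)$ is any base with $x^{v'}\mid x^{u'}$, then $v(k)\leq u(k)$ for all $k\neq i$ together with $|v|=|u|$ forces $v(i)\geq m_i$, hence $v(i)=m_i$ and $v'=u'$. In particular every element of $B'$ has modulus $d-m_i$, giving axiom (i). To verify the exchange axiom (ii) for $B'$, take $u',v'\in B'$ with $u(k)>v(k)$ for some $k\neq i$; applying axiom (ii) to $u,v\in B(\MP)$ yields $\ell$ with $u(\ell)<v(\ell)$ and $u-\varepsilon_k+\varepsilon_\ell\in B(\MP)$. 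Since $u(i)=v(i)=m_i$, the choice $\ell=i$ is impossible, so the modified base still has $i$-th coordinate $m_i$, its projection $u'-\varepsilon_k+\varepsilon_\ell$ lies in $B'$, and the required exchange in $B'$ is achieved. This completes the identification of $I_{\{i\}}$ as a polymatroidal ideal.
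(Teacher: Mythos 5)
Your proposal is correct and follows essentially the same route as the paper: identify $G(I_{\{i\}})$ as the projections of the bases with maximal $i$-th coordinate $m_i$, and then verify conditions (i) and (ii) for $B'$ exactly as the paper does, using $u(i)=v(i)=m_i$ to rule out the exchange index being $i$. The only genuine difference is in the reduction lemma: the paper gets it from the symmetric exchange property (raising $v(i)$ step by step, with induction on $a_i-v(i)$), whereas you start from a base with $i$-th coordinate $m_i$ and walk it toward $u$ using only the one-sided base exchange axiom together with a strictly decreasing potential $\sum_{k\neq i}\max(v(k)-u(k),0)$ — a valid variant that avoids invoking the symmetric exchange theorem.
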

\begin{proof}
Let $\MP$ be the polymatroid of rank $d$ on the ground set $[n]$ defining the polymatroidal ideal $I$.  Then  $I=(x^u\:\; u\in B(\mathcal{P}))$ is a monomial ideal in $S=K[x_1,\ldots,x_n]$ which is generated in degree $d$. It follows that  $I_{\{i\}}=(x^{u'}\:\; u\in B(\mathcal{P}))\subset S_{\{i\}}$, where for all $u\in B(\MP)$ we set  $x^{u'}=x^u/x_i^{u(i)}$.

We first show that $I_{\{i\}}$ is generated in one degree. More precisely, if $$a_i=\max\{u(i):u\in B(\mathcal{P})\},$$  then we show that
\[
G(I_{\{i\}})=\{x^u/x_i^{a_i}: u\in B(\mathcal{P}),\;  u(i)=a_i\}.
\]

Indeed, let $v\in B(\mathcal{P})$. Then  $v(i)\leq a_i$. We show that there exists $w\in B(\mathcal{P})$ with $w(i)=a_i$ and such that $x^{w'}$ divides $x^{v'}$. This will then yield the desired conclusion. To show this we proceed by induction on $a_i-v(i)$. If $a_i-v(i) =0$, then there is nothing to show. Suppose now that $v(i)<a_i$, and let $u\in B(\mathcal{P})$ with $u(i)=a_i$. Applying the symmetric exchange property (see \cite[Theorem 12.4.1]{HH}), there exists an integer  $j\in [n]$ with  $u(j)<v(j)$ and such that  $u-\varepsilon_i+\varepsilon_j\in B(\mathcal{P})$ and $v_1:=v-\varepsilon_j+\varepsilon_i\in B(\mathcal{P})$. Hence we obtain that $x^{v_1'}$ divides $x^{v'}$. Since $a_i-v_1(i)<a_i-v(i)$, our induction hypothesis implies that there exists  $w\in  B(\mathcal{P})$ with $w(i)=a_i$ and such that $x^{w'}$ divides $x^{v_1'}$. It follows  that $x^{w'}$ divides $x^{v'}$ as well, as desired.

It remains to be shown that the set $B':=\{u':x^{u'}\in G(I_{\{i\}})\}$ is the set of bases of a discrete polymatroid $\mathcal{P'}$ of rank $d-a_i$ on the ground set $[n]\setminus\{i\}$. First notice that for all $u'\in B'$ we have $|u'|=d-a_i$. In order to verify the exchange property, let $u',v'\in B'$ with $u'(k)>v'(k)$. Then we have that $k\neq i$. We may apply now the exchange property for $u,v\in B(\MP)$: $u(k)=u'(k)>v'(k)=v(k)$ then there exists $l\in [n]$ such that $u(l)<v(l)$ and such that the vector $t=u-\varepsilon_k+\varepsilon_l\in B(\MP)$. Since $u(i)=v(i)=a_i$, it follows that $l\neq i$ and $t(i)=a_i$. Therefore we obtain that $t'\in B'$, where $t'=u'-\varepsilon_k+\varepsilon_l$, as desired.
\end{proof}

\begin{Corollary}
\label{romanianstyle}
If $I$ is a polymatroidal ideal, then $I(P)$  is a polymatroidal ideal for all $P\in V^\ast(I)$.
\end{Corollary}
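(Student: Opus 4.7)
The plan is to deduce the corollary from Proposition~\ref{polymlocalization} by induction on the cardinality of the set of variables sent to $1$. Given $P = (x_{i_1},\ldots,x_{i_r}) \in V^*(I)$, set $L = [n]\setminus\{i_1,\ldots,i_r\}$, so that by definition $I(P) = I_L \subset S(P) = K[x_{i_1},\ldots,x_{i_r}]$; the question is therefore whether the iterated substitution $x_j\mapsto 1$ for all $j\in L$ preserves the polymatroidal property.

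If $|L|=0$ there is nothing to show since $I(P)=I$, and the case $|L|=1$ is exactly Proposition~\ref{polymlocalization}. For the inductive step I would pick any $j\in L$ and set $L' = L\setminus\{j\}$. Unwinding the $K$-algebra substitution homomorphism that defines $I_L$, one sees immediately the factorization
\[
I_L = (I_{\{j\}})_{L'},
\]
viewed inside $S_{\{j\}}$. By Proposition~\ref{polymlocalization}, $I_{\{j\}}$ is a polymatroidal ideal in the polynomial ring $S_{\{j\}}$ in $n-1$ variables, and applying the induction hypothesis to $I_{\{j\}}$ together with the subset $L'\subset [n]\setminus\{j\}$ of cardinality $|L|-1$ yields that $(I_{\{j\}})_{L'}$ is polymatroidal, completing the induction.

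I do not anticipate a real obstacle: the only point to verify is that the two successive substitutions $x_j\mapsto 1$ and then $x_k\mapsto 1$ for $k\in L'$ compose to the single substitution $x_k\mapsto 1$ for all $k\in L$, which is an immediate check from the definition of $I_L$ as the image of $I$ under the corresponding $K$-algebra homomorphism. All the genuine combinatorial work—the existence of a common value $a_j = \max\{u(j) : u \in B(\mathcal{P})\}$ and the verification of the exchange property for the localized set of bases—has already been carried out in Proposition~\ref{polymlocalization}.
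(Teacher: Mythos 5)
Your proof is correct and takes exactly the route the paper intends: the corollary is simply Proposition~\ref{polymlocalization} iterated, since $I(P)=I_L$ is obtained by performing the substitutions $x_j\mapsto 1$ one variable of $L$ at a time, and your induction on $|L|$ together with the observation $I_L=(I_{\{j\}})_{L'}$ makes this precise. Nothing further is needed.
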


\begin{Proposition}
\label{persistence}
Let $I\subset S$ be a polymatroidal ideal. Then $I$ has the persistence property.
\end{Proposition}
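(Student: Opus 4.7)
The plan is to combine the two facts singled out in the introduction as the crucial properties of polymatroidal ideals, together with the general persistence criterion established in Section~1. First, recall from \cite[Theorem 5.3]{CH} that every power $I^k$ of a polymatroidal ideal $I$ has a linear resolution. Second, by Corollary~\ref{romanianstyle}, for every monomial prime ideal $P \in V^*(I)$ the localization $I(P)$ is again a polymatroidal ideal, so the same theorem of Conca--Herzog applies to all of its powers: each $I(P)^k$ has a linear resolution.

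Now I would feed these two inputs into Proposition~\ref{linear}, applied with $k_0 = 1$, to both $I$ itself and each of its localizations $I(P)$. This yields that $\depth S(P)/I(P)^k$ is a non-increasing function of $k$ for every $P \in V^*(I)$. In other words, $I$ has non-increasing depth functions in the sense defined for monomial ideals just before Proposition~\ref{newread}.

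Having established non-increasing depth functions, I would invoke the monomial version of Proposition~\ref{basis}(b) (stated in the paragraph immediately following Lemma~\ref{localass}): a monomial ideal whose local depth functions are non-increasing automatically satisfies the persistence property, because $P \in \Ass(I^k)$ is equivalent to $\depth S(P)/I(P)^k = 0$ by Lemma~\ref{localass}(a), and the non-increasing property then forces $\depth S(P)/I(P)^\ell = 0$ for every $\ell \geq k$, hence $P \in \Ass(I^\ell)$ for all such $\ell$.

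Because every step reduces to applying a result already established in the paper, there is no real obstacle — the entire argument is a short chain of citations. The only subtle point to be careful about is that the persistence property, for monomial ideals, is verified prime by prime through the localizations $I(P)$, and one must therefore know the linearity of resolutions (and hence the non-increasing behaviour of depth) not only for $I$ but uniformly for all $I(P)$ with $P \in V^*(I)$; this is exactly what Corollary~\ref{romanianstyle} supplies.
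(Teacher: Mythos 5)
Your proposal is correct and follows essentially the same route as the paper: localize via Corollary~\ref{romanianstyle}, use that all powers of the (polymatroidal) localizations have linear resolutions, apply Proposition~\ref{linear} to get non-increasing depth, and conclude persistence through Lemma~\ref{localass}(a). The only cosmetic differences are that you cite \cite[Theorem 5.3]{CH} where the paper's proof cites the equivalent facts from \cite{HH}, and you leave implicit the (easy) identity $I^k(P)=I(P)^k$, which the paper states explicitly.
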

\begin{proof}
Let $k\geq 1$ be an integer. According to Lemma~\ref{localass} we have that  $P\in\Ass(I^k)$ if and  only if $\depth S(P)/I^k(P)=0$.  Note that $I^j(P)=I(P)^j$ for all $j\geq 1$. Moreover, we know from Corollary~\ref{romanianstyle} that  $I(P)$ is again a polymatroidal ideal. Since powers of polymatroidal ideals are again polymatroidal, see \cite[Theorem 12.6.3]{HH}, and since by \cite[Theorem 12.6.2]{HH} polymatroidal ideals have linear resolutions, we conclude that all powers of $I(P)$ have a linear resolution. Now we apply Proposition~\ref{linear} and we obtain that $\depth S(P)/I^j(P)=0$ for all $j\geq k$. But this implies that $P\in \Ass(I^j)$ for all $j\geq k$, as desired.
\end{proof}

Our next goal is to describe the stable set of associated prime ideals of a polymatroidal ideal. For that purpose we first recall  the following result of Villarreal \cite[Proposition 3.11]{Vi}. For the convenience of the reader we present here an alternative proof of it.

\begin{Theorem}
\label{normality}
Let $I\subset S$ be a polymatroidal ideal. Then $\mathcal{R}(I)$ is a normal ring.
\end{Theorem}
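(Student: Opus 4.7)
The plan is to reduce the normality of $\Rees(I)$ to the integral closedness of every power of $I$, and then exploit that powers of polymatroidal ideals remain polymatroidal. The standard criterion says that for a monomial ideal $I \subset S$, the Rees ring $\Rees(I)$ is normal if and only if $I^k$ is integrally closed for every $k \geq 1$. Since $I^k$ is polymatroidal by \cite[Theorem 12.6.3]{HH}, it suffices to prove the single statement that every polymatroidal ideal $J$ is integrally closed in $S$.

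To this end, let $J$ be the polymatroidal ideal attached to a discrete polymatroid $\mathcal{Q}$ on $[n]$ of rank $e$ with submodular rank function $\rho$, so that $B(\mathcal{Q}) = \{u \in \ZZ_{+}^n : u([n]) = e,\ u(A) \leq \rho(A)\ \text{for all}\ A \subseteq [n]\}$. If $x^a \in \overline{J}$, then by the standard description of integral closures of monomial ideals one has $a \in \ZZ_{+}^n$ and $a \geq u'$ componentwise for some $u' \in \con(B(\mathcal{Q}))$. The task will then be to produce an \emph{integer} base $u \in B(\mathcal{Q})$ with $u \leq a$; given such a $u$, the monomial $x^u$ divides $x^a$, and hence $x^a \in J$.

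For the existence of $u$, I would use a polymatroid truncation argument. Consider the box-truncated set $\mathcal{Q}' = \{v \in \mathcal{Q} : v \leq a\}$, a discrete polymatroid on $[n]$ whose rank function is
\[
\rho'(A) = \min_{B \subseteq A} \bigl(\rho(B) + a(A \setminus B)\bigr).
\]
Combining $a \geq u'$ with the base-polytope inequalities $u'(C) \leq \rho(C)$ shows that every term $\rho(B) + a([n] \setminus B)$ appearing in $\rho'([n])$ is at least $u'(B) + u'([n] \setminus B) = e$, with equality attained at $B = [n]$. Therefore $\rank \mathcal{Q}' = e$, and any integer base of $\mathcal{Q}'$, whose existence is guaranteed by the discrete polymatroid axioms (equivalently, by running the greedy algorithm on $\rho'$), is the desired $u$.

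The main obstacle will be the final step: checking cleanly that $\mathcal{Q}'$ is itself a discrete polymatroid and that its rank coincides with that of $\mathcal{Q}$. Submodularity of $\rho'$ is classical, but the rank identification $\rank \mathcal{Q}' = e$ is the real content, and it is precisely there that the polytope-level hypothesis $u' \leq a$ gets converted, via the base-polytope description, into the numerical inequality that lets the greedy algorithm reach modulus $e$ without exceeding the box $[0,a]$.
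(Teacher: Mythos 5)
Your proposal is correct, but it proves the key step by a genuinely different route than the paper. You agree with the paper up to the reduction: Rees normality is equivalent to all powers being integrally closed, and since powers of polymatroidal ideals are again polymatroidal, everything comes down to showing a single polymatroidal ideal $J$ is integrally closed. At that point the paper argues algebraically: it uses the valuative criterion $u^k\in J^k \Rightarrow u\in J$ from \cite[Theorem 1.4.2]{HH}, replaces $J$ by $\mm^{t-d}J$ (again polymatroidal) to match the degree of $u$, and then quotes the normality of the base ring $K[J]$ of a discrete polymatroid \cite[Theorem 12.5.1]{HH} to conclude $u\in J$. You instead argue polyhedrally: a monomial $x^a$ lies in $\overline{J}$ exactly when $a$ dominates a point of $\con(B(\MQ))$, and you produce an integer base below $a$ by restricting the polymatroid to the box $[0,a]$, computing the restricted rank function $\rho'(A)=\min_{B\subseteq A}(\rho(B)+a(A\setminus B))$, and checking $\rho'([n])=e$ via the inequalities $u'(B)\le\rho(B)$ and $u'\le a$; the greedy/integrality theorem for integer polymatroids then supplies the base. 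Your rank computation is correct, and the facts you lean on (submodularity and monotonicity of the convolution $\rho'$, and that the discrete polymatroid defined by an integer-valued submodular function attains modulus $\rho'([n])$) are classical Edmonds-style polymatroid theory, comparable in depth to the base-ring normality theorem the paper cites. What each approach buys: the paper's proof is shorter given the cited normality of $K[J]$, while yours avoids the base ring entirely, is self-contained at the level of rank functions, and yields the sharper combinatorial statement that every lattice point of $\con(B(\MP))+\RR_+^n$ dominates a base, which directly exhibits $\overline{J}=J$.
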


\begin{proof}
It is a well-known fact that $\mathcal{R}(I)$ is a normal ring if and only if  $I$ is a normal ideal (see \cite[Proposition 2.1.2]{HSV}). By definition, $I$ is normal if all powers of $I$ are integrally closed. Since a product of polymatroidal ideals is again a polymatroidal ideal (see \cite[Theorem 5.3]{CH}), it is enough to prove that polymatroidal ideals are integrally closed. Since  $I$ is in particular a monomial ideal, it follows from \cite[Theorem 1.4.2]{HH} that $I$ is integrally closed if and only if the following condition is satisfied: for every monomial $u\in S$ and every integer $k$ such that $u^k\in I^k$ we have $u\in I$.

Let $u\in S$ be a monomial of degree $t$ and $k$ an integer such that $u^k\in I^k$. Since $I$ is  generated in one degree, say $d$, it follows from $u^k\in I^k$ that $tk\geq dk$, that is, $t\geq d$. Let $I_l$ be the $K$-subspace of $I$ spanned by all monomials of degree $l$. Then $$(I^k)_{tk}=S_{tk-dk}(I^k)_{dk}=(S_{t-d})^k(I_d)^k=(S_{t-d}I_d)^k.$$

Observe that $S_{t-d}I_d=J_t$ where $J=\mm^{t-d}I$ is a polymatroidal ideal generated in degree $t$. Therefore, we obtain that
\[
u^k\in (I^k)_{tk}=(J_t)^k.
\]
Consequently, we have that $u$ belongs to the integral closure of the base ring $K[J]$. Applying now the normality of $K[J]$ (see \cite[Theorem 12.5.1]{HH}) we obtain that $u\in K[J]$. It follows  that $u\in J$. Therefore $u\in I$, as desired.
\end{proof}

\begin{Corollary}
\label{limitdepth}
Let $I\subset S=K[x_1,\ldots,x_n]$ be a polymatroidal ideal. Then $$\lim_{k\to \infty}\depth S/I^k=n-\ell(I).$$
\end{Corollary}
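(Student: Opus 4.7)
The plan is to chain three pre-existing ingredients: Theorem~\ref{normality}, Hochster's theorem on normal affine semigroup rings, and the Huneke/Eisenbud--Huneke circle of results recalled immediately before the statement.

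First I would observe that, because $I$ is a monomial ideal generated by $x^{u_1},\ldots,x^{u_m}$, the Rees algebra $\mathcal{R}(I)=S[It]$ is a monomial $K$-subalgebra of $S[t]=K[x_1,\ldots,x_n,t]$, generated by $x_1,\ldots,x_n,x^{u_1}t,\ldots,x^{u_m}t$. In particular, $\mathcal{R}(I)$ is an affine semigroup ring. By Theorem~\ref{normality} it is normal, and Hochster's theorem on normal affine semigroup rings then gives that $\mathcal{R}(I)$ is Cohen--Macaulay.

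Since $S$ itself is Cohen--Macaulay, Huneke's theorem \cite{Hu} (quoted just above the statement) yields that the associated graded ring $\gr_I(S)$ is Cohen--Macaulay. The Eisenbud--Huneke criterion \cite[Proposition 3.3]{EH} (also quoted above) then asserts that equality holds in Brodmann's inequality whenever $\gr_I(S)$ is Cohen--Macaulay, and therefore
\[
\lim_{k\to\infty}\depth S/I^k = n - \ell(I),
\]
as claimed.

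The substantive work is entirely concealed in Theorem~\ref{normality}; everything else is a bookkeeping application of standard facts, so there is no real obstacle internal to this corollary. The only minor point that needs a word of justification is that the Eisenbud--Huneke proposition, typically phrased for Noetherian local rings, transfers to the standard graded situation by localizing at the graded maximal ideal $\mm$: both $\depth S/I^k = \depth S_{\mm}/I^k S_{\mm}$ and the analytic spread $\ell(I)$ are preserved under this localization, so the hypothesis and conclusion of \cite[Proposition 3.3]{EH} apply directly to $S_{\mm}$ and transport back to $S$.
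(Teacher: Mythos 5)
Your argument is exactly the chain the paper intends: Theorem~\ref{normality} gives normality of $\mathcal{R}(I)$, Hochster's theorem (the source of the ``consequently Cohen--Macaulay'' remark in the introduction) upgrades this to Cohen--Macaulayness, and then the Huneke and Eisenbud--Huneke results recalled at the end of Section~1 yield equality in Brodmann's inequality, hence $\lim_{k\to\infty}\depth S/I^k=n-\ell(I)$. Your extra sentence about passing between the graded and local settings is a reasonable clarification, but the route is the same as the paper's.
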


Combining  Corollary~\ref{romanianstyle} with the preceding corollary one obtains the following algorithm to determine $\Ass^\infty(I)$ for any polymatroidal ideal.

\begin{Algorithm}
{\em Let $I$ be a polymatroidal ideal with $G(I)=\{x^{u_1},\ldots, x^{u_m}\}$, and  let $A$ be the $m\times n$ integer matrix with entries $a_{ij}=u_i(j)$.

Let $F$ be a non-empty subset of $[n]$, and  $v_1,\ldots,v_m$ be the row vectors of the submatrix $(a_{ij})_{i\in [m],\; j\in F}$ of $A$. Furthermore,  let $\{v_{i_1},\ldots, v_{i_r}\}$ be the set of minimal elements among the vectors  $v_1,\ldots,v_m$  with respect to the partial order given by componentwise comparison.
Then $P_F\in\Ass^\infty(I)$ if and only if  $\rank  (a_{i_k, j})_{k=1,\ldots,r,\; j\in F} =|F|$.

\medskip
Thus $\Ass^\infty(I)$ can be determined in finitely many steps.}
\end{Algorithm}

\section{Transversal polymatroids}

\bigskip

Let $F$ be a non-empty subset of $[n]$. As before we denote by $P_F$ the monomial prime ideal $(\{x_i:i\in F\})$. A {\em transversal} polymatroidal ideal is an ideal $I$ of the form
\begin{eqnarray}
\label{classic}
I=P_{F_1} P_{F_2}\cdots P_{F_r},
\end{eqnarray}
 where  $F_1,\ldots,F_r$ is a collection  of non-empty subsets of $[n]$ with $r\geq 1$. It follows from the definition that the product of  transversal polymatroidal ideals is again a transversal polymatroidal ideal. By taking powers of the prime ideal factors of  $I$ which appear several times in (\ref{classic}), we get
\begin{eqnarray}
\label{standard}
I=\prod_{j=1}^s P_{G_j}^{a_j} \quad \text{with}\quad a_j\geq 1,
\end{eqnarray}
where $G_j\neq G_k$ for $j\neq k$.

\begin{Lemma}
\label{unique}
Let $I$ be a transversal polymatroidal ideal. Then $I$ has a unique presentation as in {\em (\ref{standard})}.
\end{Lemma}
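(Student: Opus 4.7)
The plan is to recover the multiset $\{F_1,\ldots,F_r\}$ from $I$ alone, which is equivalent to showing that the pairs $(G_j,a_j)$ in~(\ref{standard}) are uniquely determined. Since $I$ is polymatroidal and the bases of the associated transversal polymatroid $\MP$ on $[n]$ are exactly the exponent vectors of the elements of $G(I)$, both $\MP$ and its rank function $\rho(A):=\max\{u(A):u\in\MP\}$ are intrinsic invariants of $I$. I will therefore extract from $\rho$ a set function on $2^{[n]}$ whose Möbius inverse is the multiplicity function $f(B):=|\{j:F_j=B\}|$.

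First I would compute $\rho$ explicitly. Every base of $\MP$ has the form $\sum_{j=1}^{r}\varepsilon_{i_j}$ with $i_j\in F_j$, so $b(A)\le|\{j:F_j\cap A\neq\emptyset\}|$ is immediate, and equality is achieved by choosing $i_j\in F_j\cap A$ whenever that intersection is non-empty; hence
\[
\rho(A)=|\{j:F_j\cap A\neq\emptyset\}|\quad\text{for every }A\subseteq[n].
\]
Passing to complements, the function
\[
\alpha(A):=r-\rho([n]\setminus A)=|\{j:F_j\subseteq A\}|
\]
is determined by $I$, and by definition $\alpha(A)=\sum_{B\subseteq A}f(B)$. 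Möbius inversion on the Boolean lattice $2^{[n]}$ then yields
\[
f(A)=\sum_{B\subseteq A}(-1)^{|A\setminus B|}\alpha(B),
\]
so $f$ is determined by $I$ as well. The distinct sets $G_j$ in~(\ref{standard}) are precisely those $A\subseteq[n]$ with $f(A)>0$, and the exponents are $a_j=f(G_j)$, giving the desired uniqueness.

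The only step in which anything needs to be verified is the identification of the rank function of a transversal polymatroid; once that formula is secured, the rest is a mechanical application of inclusion–exclusion on $2^{[n]}$. I do not foresee a genuine obstacle beyond keeping the bookkeeping of multiplicities straight when passing between the presentations~(\ref{classic}) and~(\ref{standard}).
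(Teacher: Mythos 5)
Your argument is correct, and it takes a genuinely different route from the paper. You recover the whole multiset $\{F_1,\ldots,F_r\}$ at once from intrinsic data of $I$: since all the products $x_{i_1}\cdots x_{i_r}$ with $i_j\in F_j$ have the same degree, $G(I)$ consists exactly of these monomials, so the rank function of the associated transversal polymatroid is determined by $I$ and equals $\rho(A)=|\{j:F_j\sect A\neq\emptyset\}|$ (your computation of this is fine; note the slip where you write $b(A)$ for $\rho(A)$, and note that $r=\rho([n])$ is itself intrinsic, being the common degree of the generators). Complementation gives $\alpha(A)=|\{j:F_j\subseteq A\}|$, and M\"obius inversion on $2^{[n]}$ then yields the multiplicity function $f$, i.e.\ the sets $G_j$ and exponents $a_j$ of (\ref{standard}). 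The paper instead argues by induction on the number $s$ of distinct prime factors: after reducing to $\Union_j G_j=[n]$, it passes to the substitutions $I_{\{i\}}$ (setting $x_i\mapsto 1$), which kill exactly the factors $P_{G_j}$ with $i\in G_j$ and therefore have fewer distinct factors; the induction hypothesis identifies every factor $P_{G_j}^{a_j}$ with $G_j\neq[n]$, and the exponent of $P_{[n]}$ is then read off from the generating degree. Your approach buys a closed formula for the multiplicities directly in terms of the exponent vectors of $G(I)$ and avoids induction altogether, at the cost of invoking the rank function and M\"obius inversion; the paper's proof is more elementary and fits the localization technique ($I\rightsquigarrow I_{\{i\}}$) that is reused throughout Section 3, for instance in the description of $\Ass(I)$ via trees of $G_I$.
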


\begin{proof}
We proceed by induction on  $s$, the number of different prime factors in the presentation of $I$. So let $s=1$ and $I=P_{G_1}^{a_1}$. We identify $G_1$ as the set of all indices $i$ for which $I_{x_i}=S_{x_i}$. The exponent $a_1$ is the degree of the generators of $I$.

Now let $s>1$ and assume that $I$ has a presentation as in (\ref{standard}). We may further assume that $\Union_{j=1}^sG_j=[n]$. Then  for each $i=1,\ldots,n$ the ideal $I_{x_i}$ determines
\[
I_{\{i\}}= \prod_{j=1\atop i\not\in G_j}^s P_{G_j}^{a_j}.
\]
The transversal polymatroidal ideal $I_{\{i\}}$ has less different prime ideal factors than  $I$ since  $\Union_{j=1}^sG_j=[n]$. Thus our induction hypothesis implies that  the presentation of $I_{\{i\}}$ in the form (\ref{standard}) is unique. For each $j$ such that $G_j\neq [n]$  there exists an integer $i\in[n]$ such that $P_{G_j}^{a_j}$ is a factor of $I_{\{i\}}$. Thus we identified all factors $P_{G_j}^{a_j}$ with $G_j\neq [n]$. The factor $P_{[n]}$ appears with the exponent
\[
d-\sum_{j=1\atop G_j\neq {[n]}}^sa_j,
\]
where $d$ is the degree of the generators of $I$.
\end{proof}

In order to characterize the set of associated prime ideals of $I$  we will introduce a graph $G_I$  associated with  $I$ as follows: the set of vertices $\MV(G_I)$ is the set $\{1,\ldots,r\}$ and $\{i,j\}$ is an edge of $G_I$ if and only if $F_i\sect F_j\neq\emptyset$.

\begin{Example}
\label{transversalgraph}
{\em Let $F_1=\{1,2\}$, $F_2=\{1,2,3,4\}$, $F_3=\{3,5\}$, $F_4=\{4,5\}$ and $I=P_{F_1}\cdots P_{F_4}$ be the transversal polymatroidal ideal of $K[x_1,\ldots,x_5]$. Notice that $I=(x_1,x_2)(x_1,x_2,x_3,x_4)(x_3,x_5)(x_4,x_5)$. Then in Figure~\ref{Fig1} we have depicted the graphs $G_I$ and $G_{I^2}$. One can notice that for any transversal polymatroidal ideal $I$ the graph $G_{I^k}$ is just the $k$-th expansion of $G_I$ (see \cite[Definition 4.2]{FHV}).

\begin{figure}[hbt]
\begin{center}
\psset{unit=1cm}
\begin{pspicture}(2.75,1.5)(8,5)
\rput(1,3){$\bullet$}
\rput(2.5,3){$\bullet$}
\rput(3.5,2.25){$\bullet$}
\rput(3.5,3.75){$\bullet$}
\rput(1,2.6){$1$}
\rput(2.5,2.6){$2$}
\rput(3.7,2){$3$}
\rput(3.7,4){$4$}
\rput(2,1.2){$\bf{G_I}$}
\psline[linewidth=0.6pt,linecolor=black](1,3)(2.5,3)
\psline[linewidth=0.6pt,linecolor=green](2.5,3)(3.5,2.25)
\psline[linewidth=0.6pt,linecolor=blue](3.5,2.25)(3.5,3.75)
\psline[linewidth=0.6pt,linecolor=red](2.5,3)(3.5,3.75)
\rput(6,2.25){$\bullet$}
\rput(7.5,2.25){$\bullet$}
\rput(6,3.75){$\bullet$}
\rput(7.5,3.75){$\bullet$}
\rput(9,1.35){$\bullet$}
\rput(10,1.80){$\bullet$}
\rput(9,4.65){$\bullet$}
\rput(10,4.20){$\bullet$}
\rput(6,1.85){$1$}
\rput(7.5,1.85){$2$}
\rput(6,4.15){$1'$}
\rput(7.5,4.15){$2'$}
\rput(9,0.95){$3'$}
\rput(10,1.40){$3$}
\rput(9,5.05){$4'$}
\rput(10,4.60){$4$}
\rput(7,1.2){$\bf{G_{I^2}}$}
\psline[linewidth=0.6pt,linecolor=black](6,2.25)(7.5,2.25)
\psline[linewidth=0.6pt,linecolor=gray](7.5,2.25)(7.5,3.75)
\psline[linewidth=0.6pt,linecolor=gray](6,2.25)(6,3.75)
\psline[linewidth=0.6pt,linecolor=black](6,2.25)(7.5,3.75)
\psline[linewidth=0.6pt,linecolor=black](6,3.75)(7.5,2.25)
\psline[linewidth=0.6pt,linecolor=black](6,3.75)(7.5,3.75)
\psline[linewidth=0.6pt,linecolor=green](7.5,2.25)(9,1.35)
\psline[linewidth=0.6pt,linecolor=gray](9,1.35)(10,1.80)
\psline[linewidth=0.6pt,linecolor=green](7.5,3.75)(10,1.80)
\psline[linewidth=0.6pt,linecolor=green](7.5,3.75)(9,1.35)
\psline[linewidth=0.6pt,linecolor=green](7.5,2.25)(10,1.80)
\psline[linewidth=0.6pt,linecolor=blue](10,4.20)(10,1.80)
\psline[linewidth=0.6pt,linecolor=gray](9,4.65)(10,4.20)
\psline[linewidth=0.6pt,linecolor=blue](9,1.35)(9,4.65)
\psline[linewidth=0.6pt,linecolor=blue](9,1.35)(10,4.20)
\psline[linewidth=0.6pt,linecolor=blue](9,4.65)(10,1.80)
\psline[linewidth=0.6pt,linecolor=red](7.5,3.75)(9,4.65)
\psline[linewidth=0.6pt,linecolor=red](7.5,2.25)(10,4.20)
\psline[linewidth=0.6pt,linecolor=red](7.5,2.25)(9,4.65)
\psline[linewidth=0.6pt,linecolor=red](7.5,3.75)(10,4.20)
\end{pspicture}
\end{center}
\caption{}
\label{Fig1}
\end{figure}}
\end{Example}

Now we are ready to decide whether the maximal ideal is an associated prime of the transversal polymatroidal ideal $I$ from the connectedness of the graph $G_I$. More precisely, we have

\begin{Theorem}
\label{maxgraphconnected}
Let $I=P_{F_1}\cdots P_{F_r}\subset S$  be a transversal polymatroidal ideal. Then $\mm\in\Ass(I)$ if and only if $\Union_{i=1}^r F_i=[n]$ and $G_I$ is connected.
\end{Theorem}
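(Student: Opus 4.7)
The plan is to reduce membership in $I = P_{F_1}\cdots P_{F_r}$ to a combinatorial (SDR/Hall) criterion: a monomial $m$ belongs to $I$ if and only if for every subset $A\subset [r]$ one has
\[
e_A(m) := \sum_{k\in F_A} b_k\geq |A|,
\]
where $F_A := \Union_{i\in A} F_i$ and $b_k$ denotes the exponent of $x_k$ in $m$. Indeed, $m\in I$ iff there is a function $f\: [r]\to [n]$ with $f(i)\in F_i$ such that $\prod_i x_{f(i)}$ divides $m$, and by Hall's theorem the required bipartite matching between $[r]$ and the multiset of factors of $m$ exists precisely under the above neighborhood condition.

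For the direction ($\Rightarrow$), given a monomial $u$ with $(I:u)=\mm$, I would first show $\Union F_i=[n]$: if some $j\notin \Union F_i$, no generator of $I$ involves $x_j$, so $x_j u\in I$ would force a generator to divide $u$, contradicting $u\notin I$. For the connectedness of $G_I$, I argue by contradiction: assume a partition $[r]=V_1\sqcup V_2$ into non-empty pieces with $F_i\cap F_j=\emptyset$ for all $i\in V_1, j\in V_2$. Setting $I_k:=\prod_{i\in V_k}P_{F_i}$ and $A_k:=\Union_{i\in V_k}F_i$, the ideals $I_1$ and $I_2$ live in disjoint variable sets, so their minimal generators are pairwise coprime and $I_1\cap I_2=I_1I_2=I$. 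The injection $S/I\hookrightarrow S/I_1\oplus S/I_2$ yields $\Ass(I)\subset \Ass(I_1)\cup \Ass(I_2)$, and since the associated primes of $I_k$ only involve variables indexed by $A_k\subsetneq [n]$, the maximal ideal $\mm$ does not appear, a contradiction.

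For the direction ($\Leftarrow$), assuming $\Union F_i=[n]$ and $G_I$ connected, I would construct the witness $u$ from a spanning tree $T$ of $G_I$: for each edge $e=\{i,j\}$ of $T$, choose $v_e\in F_i\cap F_j$ (non-empty by the definition of $G_I$), and set $u:=\prod_{e\in T} x_{v_e}$, a monomial of degree $r-1$. The Hall criterion applied with $A=[r]$ gives $e_{[r]}(u)=\deg u=r-1<r$ (since $F_{[r]}=[n]$), so $u\notin I$ and $(I:u)\subset \mm$. To show $x_j u\in I$ for every $j\in [n]$, I would verify $e_A(x_j u)\geq |A|$ for all $A\subset [r]$: the case $A=[r]$ is immediate since $j\in F_{[r]}$ and $e_{[r]}(u)=r-1$, and for $A\subsetneq [r]$ the worst case is $j\notin F_A$, so it suffices to verify $e_A(u)\geq |A|$. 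The key observation is that every edge $e=\{i,k\}$ of $T$ with $i\in A$ satisfies $v_e\in F_i\subset F_A$, hence contributes to $e_A(u)$; a standard tree identity shows that the number of edges of $T$ meeting $A$ equals $|A|-1+c$, where $c\geq 1$ is the number of components of the subforest of $T$ induced on the non-empty set $[r]\setminus A$. This gives $e_A(u)\geq |A|$, completing the argument. The main obstacle is precisely this Hall-type verification, where both hypotheses enter crucially: connectedness of $G_I$ provides the spanning tree, while $\Union F_i=[n]$ guarantees that $F_A=[n]$ only when $A=[r]$, so that the tree-counting argument applies (with $c\geq 1$) whenever $A\subsetneq [r]$.
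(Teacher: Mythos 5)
Your proof is correct, and while it reuses the paper's central construction --- the witness monomial $z=\prod_{e\in\MT}x_{v_e}$ of degree $r-1$ built from a spanning tree $\MT$ of $G_I$ --- the surrounding machinery is genuinely different at the two technical pinch points. For sufficiency, the paper verifies $x_iz\in I$ by inductively constructing, for each vertex $i$, a function on the edges of $\MT$ with image $[r]\setminus\{i\}$ that assigns each edge to one of its endpoints; you instead invoke the capacitated Hall criterion for membership in a product of monomial primes ($m\in P_{F_1}\cdots P_{F_r}$ iff $\sum_{k\in F_A}b_k\geq |A|$ for all $A\subset[r]$) and reduce everything to the identity that the number of tree edges meeting a proper nonempty $A$ is $|A|-1+c$ with $c\geq 1$ the number of components of the forest induced on the complement --- a clean uniform count replacing the paper's induction, at the cost of quoting Hall's theorem. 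For necessity, the paper proves $\Union_{i=1}^r F_i=[n]$ via a depth computation and handles disconnectedness by chasing the monomial $z$ with $I:z=\mm$ into both groups of factors and using $P_{F_1}\cdots P_{F_l}\sect P_{F_{l+1}}\cdots P_{F_r}=I$; your version is equally valid but more module-theoretic: a variable outside $\Union_i F_i$ cannot divide any generator (so it is harmless in $I:u$), and for connectedness $I=I_1\sect I_2$ with $S/I\hookrightarrow S/I_1\dirsum S/I_2$, hence $\Ass(I)\subset\Ass(I_1)\union\Ass(I_2)$, and neither summand can contribute $\mm$ since its associated primes involve only a proper subset of the variables. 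The only implicit ingredient you should cite is the standard fact that an associated monomial prime of a monomial ideal is witnessed by a monomial $u$ with $I:u=P$ (or argue componentwise on a polynomial witness); with that noted, both routes are complete, yours being shorter but leaning on two general external facts where the paper is self-contained.
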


\begin{proof}
Let us first assume that $\mm\in\Ass(I)$.  Then it follows that  $\Union_{i=1}^r F_i=[n]$. Indeed, let $F:=\Union_{i=1}^r F_i\subsetneq [n]$. Then there exists an ideal $J\subset S(P_F)$ such that $I=JS$. Therefore we have $$\depth_{S} S/I=\depth_{S(P_F)} S(P_F)/J + n-|F|>0,$$ a contradiction.

Assume that $G_I$ is disconnected. It follows from the definition of $G_I$ that after an eventual relabeling of the vertices there exists an integer $l$ such that $1\leq l<r$ and
\begin{eqnarray}
\label{3}
(\Union_{i=1}^l F_i)\sect (\Union_{i=l+1}^r F_i)=\emptyset,
\end{eqnarray}
This implies that there exist integers $s,t\in [n]$ such that
\begin{eqnarray}
\label{1}
s\in (\Union_{i=1}^l F_i) \text{ and } t\in (\Union_{i=l+1}^r F_i).
\end{eqnarray}
Since $\mm\in\Ass(I)$ then there exists a monomial $z\in S\setminus I$ such that $\mm=I:(z)$. From this it follows in particular that $x_sz\in I$ and $x_tz\in I$. By using that $I=P_{F_1}\cdots P_{F_r}$ we obtain that
\begin{eqnarray}
\label{2}
x_sz=x_{i_1}\cdots x_{i_r} \quad  \text{ and } \quad x_tz=x_{j_1}\cdots x_{j_r},
\end{eqnarray}
where $i_k,j_k\in F_k$ for all $k$. It follows now from (\ref{1}) and (\ref{2}) that $s\in\{i_1,\ldots,i_l\}$ and  $t\in\{j_{l+1},\ldots,j_r\}$. Hence  $z\in P_{F_{l+1}}\cdots P_{F_r}$ and  $z\in P_{F_1}\cdots P_{F_l}$. Consequently $$z\in P_{F_1}\cdots P_{F_l}\sect P_{F_{l+1}}\cdots P_{F_r}=P_{F_1}\cdots P_{F_l}\cdot P_{F_{l+1}}\cdots P_{F_r}=I,$$ where the first equality is implied by (\ref{3}). This yields that $z\in I$, a contradiction.

Conversely, assume that  $\Union_{i=1}^r F_i=[n]$ and $G_I$ is connected.  We will prove that $\mm\in\Ass(I)$ by explicitly constructing a monomial $z\in S\setminus I$ such that $I:z=\mm$. Since $G_I$ is connected and has $r$ vertices, we may consider a spanning tree $\MT$ for $G_I$, that is, a collection of $r-1$ edges, say $e_1,\ldots,e_{r-1}$ which cover all vertices of $G_I$ (see the trees $\MT_1$, $\MT_2$ and $\MT_3$ from Example~\ref{treemaxgraph}).

For $k=1,\ldots,r-1$ let $e_k=\{i_k,j_k\}$. Then by the definition of $G_I$ we have that $F_{i_k}\sect F_{j_k}\neq\emptyset$. For any such $k$ we  choose an element  $l_k\in F_{i_k}\sect F_{j_k}$. We define now the monomial $z$ as being $$z=x_{l_1}\cdots x_{l_{r-1}}.$$

We claim that for any $i\in [r]$ we have that $z\in\prod_{j\neq i}P_{F_j}$. If our claim is true then we obtain at once that $I:z=\mm$. Indeed, since $\deg(z)=r-1$ we obtain first that $z\not\in I$. It remains to be shown that $\mm z\subset I$. Let $i\in [n]$ be an arbitrary integer. By our assumption, we have that $\Union_{i=1}^r F_i=[n]$, hence there exists an integer $k$ such that $i\in F_k$. By the claim we have that $z\in\prod_{j\neq k} P_{F_j}$. Therefore, we obtain $$x_iz\in P_{F_k}\cdot\prod_{j\neq k} P_{F_j}=I,$$ as desired.

In order to prove our claim let $i\in [r]$ be an integer. We will reformulate our claim in terms of certain numerical functions on trees  and prove it by induction on $r$. Indeed, it follows from the definition of $z$ that $z\in \prod_{j\neq i} P_{F_j}$ if there exists a function $f\: \{e_1,\ldots, e_{r-1}\}\to [r]$ with $\Im f=[r]\setminus \{i\}$ and such that $f(e_k)\in\{i_k,j_k\}$.

 The case $r=2$ is obvious. Since $\MT$ is a tree, there exists a vertex of degree $1$. For simplicity, we may assume that this vertex is $1$, his only adjacent vertex is $2$ and $e_1=\{1,2\}$. The graph $\MT\setminus\{1\}$ is  a tree with the $r-1$ vertices $\{2,\ldots,r\}$ and edges $\{e_2,\ldots,e_{r-1}\}$. It follows from the induction hypothesis that for every $i\in\{2,\ldots,r\}$ there exists a function $f_i:\{e_2,\ldots,e_{r-1}\}\to \{2,\ldots,r\}$ such that $\Im f_i=\{2,\ldots,r\}\setminus\{i\}$. We may extend these functions to
\[
\tilde{f}_i:\{e_1,\ldots,e_{r-1}\}\mapsto \{1,\ldots,r\},
\]
by setting $\tilde{f}_i(e_1)=1$ and $\tilde{f}_i(e_j)=f_i(e_j)$ for all $j\geq 2$ and obtain that $\Im \tilde{f}_i=[r]\setminus\{i\}$ for all $i\geq 2$. Finally consider $\tilde{f}_1:\{e_1,\ldots,e_{r-1}\}\mapsto \{1,\ldots,r\}$ to be the function defined by $\tilde{f}_1(e_1)=2$ and $\tilde{f}_1(e_j)=f_2(e_j)$ for all $j\geq 2$. It follows that $\Im \tilde{f}_1=[r]\setminus\{1\}$ and we are done.
\end{proof}

\begin{Example}
\label{treemaxgraph}
{\em In the case that $\mm\in\Ass(I)$ the proof of Theorem~\ref{maxgraphconnected} allows us  to compute a monomial $z$ such that $I:z=\mm$. However we may  have several possibilities for choosing $z$ since its choice depends on the spanning tree of $G_I$ and on the intersections of the sets $F_i$ corresponding to the adjacent vertices of the tree. Indeed, let us return to the ideal $I$ from the Example~\ref{transversalgraph}. The graph $G_I$ is connected and we have $3$ spanning trees $\MT_1$, $\MT_2$ and $\MT_3$ depicted below.
\begin{figure}[hbt]
\begin{center}
\psset{unit=1cm}
\begin{pspicture}(3.75,1.5)(8,4)
\rput(1,3){$\bullet$}
\rput(2.5,3){$\bullet$}
\rput(3.5,2.25){$\bullet$}
\rput(3.5,3.75){$\bullet$}
\rput(1,2.6){$1$}
\rput(2.5,2.6){$2$}
\rput(3.7,2){$3$}
\rput(3.7,4){$4$}
\rput(2,1.5){$\bf{\MT_1}$}
\psline[linewidth=0.6pt](1,3)(2.5,3)
\psline[linewidth=0.6pt](2.5,3)(3.5,2.25)
\psline[linewidth=0.6pt](2.5,3)(3.5,3.75)
\rput(4.5,3){$\bullet$}
\rput(6,3){$\bullet$}
\rput(7,2.25){$\bullet$}
\rput(7,3.75){$\bullet$}
\rput(4.5,2.6){$1$}
\rput(6,2.6){$2$}
\rput(7.2,2){$3$}
\rput(7.2,4){$4$}
\rput(5.5,1.5){$\bf{\MT_2}$}
\psline[linewidth=0.6pt](4.5,3)(6,3)
\psline[linewidth=0.6pt](7,2.25)(7,3.75)
\psline[linewidth=0.6pt](6,3)(7,3.75)
\rput(8,3){$\bullet$}
\rput(9.5,3){$\bullet$}
\rput(10.5,2.25){$\bullet$}
\rput(10.5,3.75){$\bullet$}
\rput(8,2.6){$1$}
\rput(9.5,2.6){$2$}
\rput(10.7,2){$3$}
\rput(10.7,4){$4$}
\rput(9,1.5){$\bf{\MT_3}$}
\psline[linewidth=0.6pt](8,3)(9.5,3)
\psline[linewidth=0.6pt](9.5,3)(10.5,2.25)
\psline[linewidth=0.6pt](10.5,2.25)(10.5,3.75)
\end{pspicture}
\end{center}
\label{Fig2}
\end{figure}

The spanning tree $\MT_1$ gives rise to two such monomials, $x_1x_3x_4$ and $x_2x_3x_4$, since $F_1\sect F_2=\{1,2\}$, $F_2\sect F_3=\{3\}$ and $F_2\sect F_4=\{4\}$. Analogously $\MT_2$ and $\MT_3$ determine the monomials $x_1x_4x_5$, $x_2x_4x_5$ respectively $x_1x_3x_5$, $x_2x_3x_5$. }
\end{Example}

\begin{Corollary}
\label{maxinfinity}
Let $I\subset S$ be a transversal polymatroidal ideal. Then $\mm\in\Ass(I)$ if and only if  $\mm\in\Ass^{\infty}(I).$
\end{Corollary}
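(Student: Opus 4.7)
The plan is to derive this from Theorem~\ref{maxgraphconnected} together with the persistence property (Proposition~\ref{persistence}), so no new machinery is needed. The forward implication $\mm\in\Ass(I)\Rightarrow \mm\in\Ass^\infty(I)$ is immediate: by Proposition~\ref{persistence}, polymatroidal ideals have the persistence property, so $\mm\in\Ass(I)$ forces $\mm\in\Ass(I^k)$ for every $k\geq 1$, hence $\mm\in\Ass^\infty(I)$.

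For the converse, the key observation is that $I^k$ is itself a transversal polymatroidal ideal. Indeed, since ideal multiplication is commutative,
\[
I^k=(P_{F_1}\cdots P_{F_r})^k=\underbrace{P_{F_1}\cdots P_{F_1}}_{k\text{ times}}\cdot\underbrace{P_{F_2}\cdots P_{F_2}}_{k\text{ times}}\cdots\underbrace{P_{F_r}\cdots P_{F_r}}_{k\text{ times}},
\]
a product of $rk$ monomial prime ideals indexed by the multiset of subsets $(F_1,\dots,F_1,F_2,\dots,F_2,\dots,F_r,\dots,F_r)$. Thus Theorem~\ref{maxgraphconnected} applies to $I^k$: we have $\mm\in\Ass(I^k)$ iff $\Union_{i=1}^r F_i=[n]$ and the associated graph $G_{I^k}$ is connected.

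The main (and essentially only) step is then to compare $G_I$ with $G_{I^k}$. The union condition $\Union F_i=[n]$ is the same for $I$ and $I^k$. For the connectedness, the $rk$ vertices of $G_{I^k}$ split into $r$ blocks of size $k$, one per $F_i$. Within the $i$-th block, all pairs of vertices are adjacent whenever $F_i\cap F_i=F_i\neq\emptyset$ (always), so each block is a clique. Between two blocks corresponding to $F_i$ and $F_j$, there is an edge (in fact, every possible edge) in $G_{I^k}$ precisely when $F_i\cap F_j\neq\emptyset$, i.e.\ precisely when $\{i,j\}$ is an edge of $G_I$. Contracting each block to a single vertex therefore turns $G_{I^k}$ into $G_I$, and conversely $G_{I^k}$ is obtained from $G_I$ by blowing up each vertex into a clique of size $k$ and joining cliques by complete bipartite graphs along the edges of $G_I$ (this is precisely the "$k$-th expansion" hinted at in Example~\ref{transversalgraph}). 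Since such a blow-up preserves connectedness in both directions, $G_{I^k}$ is connected iff $G_I$ is connected.

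Combining these two observations, Theorem~\ref{maxgraphconnected} yields $\mm\in\Ass(I^k)\iff\mm\in\Ass(I)$ for every $k\geq 1$. In particular, $\mm\in\Ass^\infty(I)=\Ass(I^{k_0})$ for the index of stability $k_0$ implies $\mm\in\Ass(I)$, which finishes the proof. I do not expect any technical obstacle here; the only thing to be careful about is to verify cleanly that the vertex–clique blow-up construction preserves connectedness in both directions, which is routine via lifting paths from $G_I$ to $G_{I^k}$ and projecting paths from $G_{I^k}$ back to $G_I$.
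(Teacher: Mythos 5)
Your argument is correct and follows essentially the same route as the paper: the forward direction via the persistence property (Proposition~\ref{persistence}), and the converse by applying Theorem~\ref{maxgraphconnected} to $I^k$ (which is again transversal) and observing that $G_{I^k}$ is connected if and only if $G_I$ is. The paper merely asserts the last equivalence as "easily noticed," whereas you spell out the clique blow-up argument, which is a harmless elaboration of the same idea.
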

\begin{proof}
By Proposition~\ref{persistence}, $I$ satisfies the persistence property.  Therefore  $\mm\in\Ass^{\infty}(I)$ if  $\mm\in\Ass(I)$.  For the converse, let $I=P_{F_1}\cdots P_{F_r}$ and  $\mm\in\Ass^{\infty}(I)$. Then there exists an integer $k\geq 1$ such that $\mm\in\Ass(I^k)$. Since $I^k$ is again a transversal polymatroidal ideal, it follows from Theorem~\ref{maxgraphconnected} that $G_{I^k}$ is connected and $\Union_{i=1}^r F_i=[n]$. One  easily notices that   $G_{I^k}$ is connected  if and only if $G_I$ is connected.  Applying again Theorem~\ref{maxgraphconnected} we get the desired conclusion.
\end{proof}

By using the fact  that the  localization of a transversal polymatroidal ideal is again a transversal polymatroidal ideal we obtain the following

\begin{Corollary}
\label{astab1}
Let $I\subset S$ be a transversal polymatroidal ideal. Then $\astab(I)=1$, that is,  $$\Ass(I)=\Ass^{\infty}(I).$$
\end{Corollary}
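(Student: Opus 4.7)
The plan is to prove the nontrivial inclusion $\Ass^{\infty}(I)\subseteq \Ass(I)$; the reverse inclusion $\Ass(I)\subseteq\Ass^\infty(I)$ is already supplied by Proposition~\ref{persistence}, since polymatroidal ideals satisfy the persistence property. The strategy is to reduce the case of an arbitrary associated prime to the case of the graded maximal ideal via the localization operation $I\mapsto I(P)$, and then invoke Corollary~\ref{maxinfinity}.

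The first step I would carry out is a structural observation: the class of transversal polymatroidal ideals is stable under the localization operation $I\mapsto I(P_F)$ for any monomial prime $P_F$. Writing $I=P_{F_1}\cdots P_{F_r}$, setting $x_j=1$ for all $j\notin F$ transforms each factor $P_{F_i}$ into $P_{F_i\cap F}$, with the convention that factors for which $F_i\cap F=\emptyset$ become the unit ideal and can be dropped. Hence
\[
I(P_F)=\prod_{i:\ F_i\cap F\neq\emptyset} P_{F_i\cap F},
\]
which is a transversal polymatroidal ideal inside $S(P_F)=K[x_j\: j\in F]$.

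Next, let $P\in\Ass^{\infty}(I)$. Since $I$ is monomial, $P$ is a monomial prime, say $P=P_F$. Because $I^k(P)=I(P)^k$, Lemma~\ref{localass}(a) gives
\[
P\in \Ass(I^k)\quad\Longleftrightarrow\quad \mathfrak{m}_{S(P)}\in \Ass\bigl(I(P)^k\bigr),
\]
so that $P\in\Ass^{\infty}(I)$ translates into $\mathfrak{m}_{S(P)}\in\Ass^{\infty}(I(P))$. Applying Corollary~\ref{maxinfinity} to the transversal polymatroidal ideal $I(P)$ in $S(P)$ then produces $\mathfrak{m}_{S(P)}\in\Ass(I(P))$, and pulling this back through Lemma~\ref{localass}(a) yields $P\in\Ass(I)$, as required.

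There is essentially no obstacle beyond organizing the reduction: all the deep work was done in Theorem~\ref{maxgraphconnected} and Corollary~\ref{maxinfinity}. The only point that deserves explicit verification is the closure of the transversal class under $I\mapsto I(P)$; Corollary~\ref{romanianstyle} alone would give only that $I(P)$ is polymatroidal, which is not enough to invoke Corollary~\ref{maxinfinity}. The explicit product formula above takes care of this subtlety and completes the argument.
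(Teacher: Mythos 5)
Your overall reduction is exactly the paper's: Proposition~\ref{persistence} gives $\Ass(I)\subseteq\Ass^{\infty}(I)$, and for the converse you pass from $P\in\Ass(I^k)$ to $\mm_{S(P)}\in\Ass(I(P)^k)$ via Lemma~\ref{localass} together with $I^k(P)=I(P)^k$, apply Corollary~\ref{maxinfinity} to the transversal ideal $I(P)$, and pull back. This is precisely how the paper argues, so there is no difference in route.

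However, the one point you single out as requiring verification is justified by a false formula. Setting $x_j=1$ for all $j\notin F$ does \emph{not} transform a factor $P_{F_i}$ into $P_{F_i\cap F}$: whenever $F_i\not\subseteq F$, some generator $x_j$ with $j\in F_i\setminus F$ is mapped to $1$, so the image of $P_{F_i}$ is the unit ideal, regardless of whether $F_i\cap F$ is empty. For instance, with $I=(x_1,x_2)(x_2,x_3)$ and $F=\{1,2\}$ one has $I(P_F)=(x_1,x_2)$, whereas your formula would give $(x_1,x_2)(x_2)=(x_1x_2,x_2^2)$. The correct statement is
\[
I(P_F)=\prod_{i:\ F_i\subseteq F}P_{F_i},
\]
that is, the prime factors contained in $P_F$ survive unchanged and all other factors are dropped; this is the formula the paper itself uses (compare $I_{\{i\}}=\prod_{j:\ i\notin G_j}P_{G_j}^{a_j}$ in the proof of Lemma~\ref{unique}). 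Since the corrected formula still exhibits $I(P)$ as a transversal polymatroidal ideal (and the product is nonempty because $P\in\Ass^{\infty}(I)\subseteq V^*(I)$ forces $F_i\subseteq F$ for at least one $i$), the remainder of your argument goes through verbatim; but as written the localization step is incorrect and must be repaired as above before Corollary~\ref{maxinfinity} can be invoked.
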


\begin{proof}
It follows from Proposition~\ref{persistence} that $\Ass(I)\subset\Ass^{\infty}(I)$. For the converse inclusion, let $P\in\Ass^{\infty}(I)$. Then there exists $k\geq 1$ such that $P\in\Ass(I^k)$. Applying Lemma~\ref{localass} we obtain that $P\in\Ass(S(P)/I^k(P))$. Notice now that $I^k(P)=I(P)^k$ and that $I(P)$ is also a transversal polymatroidal ideal. Since $P$ is the maximal ideal of $S(P)$, it follows from Corollary~\ref{maxinfinity} that $P\in\Ass(S(P)/I(P))$. Therefore, by applying again  Lemma~\ref{localass} we obtain that $P\in\Ass(I)$.
\end{proof}

Next we want to describe the set of associated prime ideals of a transversal polymatroidal ideal $I$. In \cite[Lemma 3.2]{CH} the authors gave a primary decomposition of such a transversal polymatroidal ideal, but unfortunately this primary decomposition is in general far from being irredundant, see also \cite[Proposition 3.4]{CH}. Therefore we cannot read off from their primary decomposition the set of associated prime ideals of a transversal polymatroidal ideal. However, by using the graph $G_I$ this can be done. For this,  to each subgraph $\MH$ of $G_I$ we  associate the prime ideal $P_\MH=\sum_{i\in\MV(\MH)} P_{F_i}$.

\begin{Theorem}
\label{asstransversal}
Let $I\subset S$ be a transversal polymatroidal ideal. Then
\[
\Ass(I)=\{P_\MT\:\;  \MT \text{ is a tree in } G_I\}.
\]
\end{Theorem}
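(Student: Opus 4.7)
The plan is to reduce $P_F\in\Ass(I)$ to an instance of the criterion of Theorem~\ref{maxgraphconnected}. By Lemma~\ref{localass}(a) this is equivalent to the graded maximal ideal of $S(P_F)$ being associated to $I(P_F)$, so the first step is to identify $I(P_F)$ explicitly. Applying to $I=P_{F_1}\cdots P_{F_r}$ the substitution $x_j\mapsto 1$ for $j\notin F$ turns the factor $P_{F_i}$ into the unit ideal exactly when some variable from $F_i$ is sent to $1$, i.e.\ whenever $F_i\not\subset F$. Writing $V(F):=\{i\in[r]:F_i\subset F\}$, this yields
\[
I(P_F)=\prod_{i\in V(F)}P_{F_i}\quad\text{in }S(P_F)=K[x_j:j\in F],
\]
which is once again a transversal polymatroidal ideal.

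Next I would feed this presentation into Theorem~\ref{maxgraphconnected}: the maximal ideal of $S(P_F)$ is associated to $I(P_F)$ if and only if $\bigcup_{i\in V(F)}F_i=F$ and the intersection graph of $I(P_F)$ is connected. A direct check shows that this intersection graph is just the induced subgraph $G_I[V(F)]$, because for $i,j\in V(F)$ the condition $F_i\cap F_j\neq\emptyset$ is the same whether read in $S$ or in $S(P_F)$. Combining, one obtains the key reformulation
\[
P_F\in\Ass(I)\iff \bigcup_{i\in V(F)}F_i=F\ \text{ and }\ G_I[V(F)]\text{ is connected.}
\]

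It then remains to match this with the existence of a tree $\MT\subset G_I$ satisfying $P_\MT=P_F$. For the direction $\Rightarrow$, any spanning tree $\MT$ of the connected induced graph $G_I[V(F)]$ has $\MV(\MT)=V(F)$, so $P_\MT=P_{\bigcup_{i\in V(F)}F_i}=P_F$. For $\Leftarrow$, given any tree $\MT$ in $G_I$, set $F:=\bigcup_{i\in\MV(\MT)}F_i$ so that $P_\MT=P_F$; then $\MV(\MT)\subset V(F)$ gives $\bigcup_{i\in V(F)}F_i=F$, while $G_I[V(F)]$ is connected because $\MT$ already spans $\MV(\MT)$ inside it and every $j\in V(F)\setminus\MV(\MT)$ has $F_j\subset F$, so some element of $F_j$ lies in an $F_i$ with $i\in\MV(\MT)$, producing an edge of $G_I[V(F)]$ attaching $j$ to the tree.

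I expect the main subtlety to lie in the explicit identification of $I(P_F)$: the tempting but incorrect formula $I(P_F)=\prod_{i:\,F_i\cap F\neq\emptyset}P_{F_i\cap F}$ has to be replaced by the product over $V(F)$, since any factor $P_{F_i}$ with $F_i\not\subset F$ acquires the constant $1$ among its generators under the substitution and disappears. Once this is in place and the induced-subgraph structure of $G_{I(P_F)}$ is recognized, Theorem~\ref{maxgraphconnected} closes the argument.
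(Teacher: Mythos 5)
Your proof is correct, and it takes a genuinely more direct route than the paper's. The paper argues by induction on the number $r$ of prime factors: after reducing to the case where $G_I$ is connected and $\bigcup_j F_j=[n]$, it handles $P=\mm$ by Theorem~\ref{maxgraphconnected} and, for a non-maximal associated prime, picks a variable $x_i\notin P$ and passes to $I_{\{i\}}=\prod_{j:\,i\notin F_j}P_{F_j}$, which has fewer factors, applying the induction hypothesis in both directions. You instead localize once, directly at $P_F$: via Lemma~\ref{localass}(a) and the identification $I(P_F)=\prod_{i\in V(F)}P_{F_i}$ with $V(F)=\{i: F_i\subset F\}$ (the same computation the paper performs one variable at a time), Theorem~\ref{maxgraphconnected} applied to $I(P_F)$ yields the clean intermediate criterion that $P_F\in\Ass(I)$ if and only if $\bigcup_{i\in V(F)}F_i=F$ and the induced subgraph $G_I[V(F)]$ is connected; the translation to trees (a spanning tree of $G_I[V(F)]$ in one direction, and in the other direction attaching each extra vertex of $V(F)$ to a given tree $\MT$ by an edge, which exists because $F_j\subset\bigcup_{i\in\MV(\MT)}F_i$) is a short combinatorial argument that replaces the paper's induction. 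What your version buys is an explicit, checkable characterization of the associated primes in terms of the sets $V(F)$, with no induction beyond what is already contained in Theorem~\ref{maxgraphconnected}; the paper's inductive scheme avoids discussing the induced subgraph and is the template it reuses in later arguments. One trivial case you pass over silently: if no $F_i$ is contained in $F$, then $I(P_F)$ is the unit ideal, which is not formally covered by Theorem~\ref{maxgraphconnected}; but then $\Ass(I(P_F))=\emptyset$ and your criterion fails too, so nothing is lost.
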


\begin{proof}
Let $I=P_{F_1}\cdots P_{F_r}$.    We prove the statement by induction on $r$. The case $r=1$ is trivial, since in that case $G_I$ is just a vertex. We may assume that $G_I$ is connected. Indeed, let $G_1,\ldots, G_k$ be the connected components of $G_I$, with $k\geq 2$. Then
$
I=I_1\cdots I_k,
$
where
$$
I_j=\prod_{i\in \MV(G_j)}P_{F_i}.
$$
Furthermore, $G_j=G_{I_j}$ for all $j$.  Notice that $I=I_1\cdots I_k=I_1\sect\cdots\sect I_k$, since the ideals $I_j$ are generated in pairwise disjoint  sets of variables. Hence we obtain that $\Ass(I)=\Ass(I_1)\union\ldots\union \Ass(I_k)$, where $I_j$ is a transversal polymatroidal ideal with the associated connected graph $G_{I_j}$.

Obviously we may assume that $\Union_{j=1}^r F_j=[n]$.

 Let $P\in\Ass(I)$.  If $P=\mm$, then by Theorem~\ref{maxgraphconnected} we have that   $P=P_\MT$, where $\MT$ is a spanning tree of $G_I$. Otherwise  there exists an integer $i\in [n]$ such that $x_i\not\in P$. Then $P\in \Ass(I_{\{i\}})$ where
\[
I_{\{i\}}=\prod_{j=1\atop i\not\in F_j}^r P_{F_j}.
\]
The number of prime factors appearing in $I_{\{i\}}$ is less than $r$, since $\Union_{j=1}^r F_j=[n]$. Applying now the induction hypothesis, we obtain that $P=P_\MT$ for some tree $\MT$ in $G_{I_{\{i\}}}$. Since $G_{I_{\{i\}}}$ is a subgraph of $G_I$, we obtain the desired conclusion.

Conversely, let $\MT$ be a tree in $G_I$. If $\MT$ is a spanning tree, then we know from the proof of Theorem~\ref{maxgraphconnected} that $P_\MT=\mm\in\Ass(I)$. Therefore we may assume that $\MT$ is a tree in $G_I$ with $|\MV(\MT)|<r$ and $P_\MT\neq\mm$. This implies that there exists an integer $i\in [n]$ such that $x_i\not\in P_\MT$. Then  $\MT$ remains a tree in $G_{I_{\{i\}}}$ since all vertices of $\MT$ belong to $G_{I_{\{i\}}}$. Moreover,  the number of prime factors appearing in $I_{\{i\}}$ is less than $r$. Therefore, by induction hypothesis we obtain that $P_\MT\in\Ass(I_{\{i\}})$ and consequently $P_\MT\in\Ass(I)$.
\end{proof}

\begin{Example}
\label{assexample}
{\em  Consider again the ideal $I$ given in the Example~\ref{transversalgraph}, that is $$I=(x_1,x_2)(x_1,x_2,x_3,x_4)(x_3,x_5)(x_4,x_5).$$ The trees of $G_I$ have $1,2,3$ or $4$ vertices. To the trees with $1$ vertex, that is the vertices, correspond the associated primes $P_{F_1},\ldots,P_{F_4}$. To the trees with $2$ vertices correspond the associated primes $P_{F_1}+P_{F_2},P_{F_2}+P_{F_3},P_{F_2}+P_{F_4},P_{F_3}+P_{F_4}$. All the trees with $3$ and $4$ vertices generate the same associated prime $\mm$. Consequently we obtain that
\[
\Ass(I)=\{(x_1,x_2),(x_1,x_2,x_3,x_4),(x_3,x_5),(x_4,x_5),(x_3,x_4,x_5),(x_1,x_2,x_3,x_4,x_5)\}.
\]
In particular, we also have that the minimal associated primes correspond to vertices of $G_I$. However, as this example shows, in general not all the vertices give rise to minimal prime ideals.
}
\end{Example}

As a consequence of the above theorem we obtain a description of all possible sets of associated prime ideals of a transversal polymatroidal ideal. More precisely we have

\begin{Corollary}
\label{assprimes}
Let $\MF$ be a subset of $2^{[n]}$ such that $\emptyset\not\in\MF$. Assume that $\MF$ satisfies the following condition:
\begin{eqnarray}
\label{poset}
 A\union B\in\MF\quad \text{for all}\quad  A,B\in\MF \quad \text{with}\quad  A\sect B\neq\emptyset.
\end{eqnarray}
Then there exists a transversal polymatroidal ideal $I$ such that $$\Ass(I)=\{P_A\:\;  A\in\MF\}.$$ Conversely, given any transversal polymatroidal ideal $I$, then  the set $$\{A\:\; A\subset [n] \text{ and } P_A\in\Ass(I)\}$$  satisfies condition {\em (\ref{poset})}.
\end{Corollary}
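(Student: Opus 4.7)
The plan is to deduce both statements from Theorem~\ref{asstransversal}, which identifies $\Ass(P_{F_1}\cdots P_{F_r})$ with the set of primes $P_{\bigcup_{i\in S}F_i}$, where $S\subset[r]$ runs over vertex subsets inducing a connected subgraph of $G_I$. Here one uses that the vertex set of any tree in $G_I$ is connected and that any connected induced subgraph admits a spanning tree, so trees and connected vertex subsets give the same family of associated primes.

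For the forward direction, enumerate $\MF=\{A_1,\ldots,A_r\}$ and put $I=P_{A_1}\cdots P_{A_r}$; then $G_I$ has vertex set $[r]$ with edge $\{i,j\}$ exactly when $A_i\cap A_j\neq\emptyset$. Each singleton $\{i\}$ is a tree in $G_I$, so $P_{A_i}\in\Ass(I)$ and $\{P_A:A\in\MF\}\subset\Ass(I)$. For the opposite inclusion it suffices to show that, for every nonempty $S\subset[r]$ such that $G_I|_S$ is connected, the union $\bigcup_{i\in S}A_i$ lies in $\MF$. I would prove this by induction on $|S|$: when $|S|\geq 2$, pick a leaf $l$ of a spanning tree of $G_I|_S$; then $S':=S\setminus\{l\}$ remains connected and $A_l\cap A_j\neq\emptyset$ for some $j\in S'$. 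The induction hypothesis puts $U:=\bigcup_{i\in S'}A_i$ in $\MF$, and since $U\supset A_j$ meets $A_l\in\MF$, hypothesis~(\ref{poset}) forces $U\cup A_l=\bigcup_{i\in S}A_i\in\MF$.

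For the converse, write $I=P_{F_1}\cdots P_{F_r}$ and set $\mathcal{A}=\{A\subset[n]:P_A\in\Ass(I)\}$. By Theorem~\ref{asstransversal}, every element of $\mathcal{A}$ has the form $\bigcup_{i\in S}F_i$ for some $S\subset[r]$ with $G_I|_S$ connected. If $A=\bigcup_{i\in S}F_i$ and $B=\bigcup_{j\in T}F_j$ lie in $\mathcal{A}$ with $A\cap B\neq\emptyset$, any $l\in A\cap B$ lies in some $F_i$ with $i\in S$ and some $F_j$ with $j\in T$, producing an edge $\{i,j\}$ of $G_I$ that joins $S$ to $T$. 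Therefore $S\cup T$ is connected in $G_I$, and the spanning tree of $G_I|_{S\cup T}$ witnesses $A\cup B=\bigcup_{k\in S\cup T}F_k\in\mathcal{A}$.

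The only genuinely technical step is the inductive claim in the forward direction, which is the heart of the argument; everything else amounts to a clean dictionary between trees in $G_I$, connected vertex subsets, and condition~(\ref{poset}).
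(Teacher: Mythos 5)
Your proof is correct and follows essentially the same route as the paper: both directions rest on Theorem~\ref{asstransversal}, with the forward inclusion handled by the same leaf-removal induction combined with condition~(\ref{poset}), and the converse by joining the two trees through an edge coming from a common element of $A\cap B$ and passing to a spanning tree. Your reformulation of trees as connected vertex subsets is only a cosmetic repackaging of the paper's argument, not a different method.
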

\begin{proof}
Consider $\MF=\{A_1,\ldots,A_r\}$ to be a set of non-empty subsets of $[n]$ which satisfies condition (\ref{poset}). We define the transversal polymatroidal ideal $I\subset S$ to be $I=\prod_{i=1}^r P_{A_i}$. Then since we consider the vertices of $G_I$ as trees as well, we obtain from Theorem~\ref{asstransversal} that
\[
\Ass(I)=\{P_\MT\:\;  \MT \text{ is a tree in } G_I\}\supset \{P_{A_1},\ldots,P_{A_r}\}.
\]
We prove the converse inclusion by showing that $P_{\MT}\in \{P_{A_1},\ldots,P_{A_r}\}$ for any tree $\MT$ of $G_I$. This will be shown by induction on $k$, the number of vertices of a tree of $G_I$. The case $k=1$ is obvious, since the vertices of $G_I$ correspond to all $P_{A_i}$ with $i=1,\ldots,r$. Assume now that $\MT$ is a tree of $G_I$ with the set of vertices $\MV(\MT)=\{i_1,\ldots,i_k\}$. Since $\MT$ is a tree, there exists a vertex of degree $1$. We may assume that this vertex is $i_1$ and furthermore that $\{i_1,i_2\}$ is an edge of $\MT$. Therefore $A_{i_1}\sect A_{i_2}\neq\emptyset$. For the tree $\MT'=\MT\setminus\{i_1\}$ we apply the induction hypothesis and obtain that $P_{\MT'}=P_A$, where $A=\Union_{j=2}^k A_{i_j}\in\MF$. Hence $A_{i_1}\sect A\neq\emptyset$, and by using the fact that $A_{i_1},A\in\MF$ we obtain via (\ref{poset}) that $A_{i_1}\union A\in\MF$. The conclusion follows at once from the equality $P_\MT=P_B$, where $B=A_{i_1}\union A$.

Conversely, let $I=P_{F_1}\cdots P_{F_r}$ be a transversal polymatroidal ideal. Consider now two subsets $A,B$ of $[n]$ such that $A\sect B\neq\emptyset$ and  $P_A,P_B\in\Ass(I)$. By Theorem~\ref{asstransversal} we know that there exist two trees $\MT,\MT'$ of $G_I$ such that $P_A=P_\MT$ and $P_B=P_{\MT'}$. Therefore, we obtain $A=\Union_{i\in\MV(\MT)}F_i$ and $B=\Union_{i\in\MV(\MT')}F_i$. Thus $A\sect B\neq\emptyset$ implies that there exist two vertices $i\in\MV(\MT)$ and $j\in\MV(\MT')$ such that $F_i\sect F_j\neq\emptyset$. Consequently, the subgraph $\MH$ of $G_I$, whose set of vertices $\MV(\MH)$ is $\MV(\MT)\union\MV(\MT')$ and the edges of $\MH$ are the edges of $\MT$ ant $\MT'$, is connected. A spanning tree $\MT''$ of $\MH$ is a tree of $G_I$ and has the property that
\[
P_{\MT''}=\sum_{i\in\MV(\MT'')} P_{F_i}=\sum_{i\in\MV(\MH)} P_{F_i}=\sum_{i\in\MV(\MT)\union\MV(\MT')} P_{F_i}=P_{A\union B}.
\]
Therefore, by applying again Theorem~\ref{asstransversal} we obtain that $P_{A\union B}\in\Ass(I)$, as desired.
\end{proof}

We obtain also from the Theorem~\ref{asstransversal} an irredundant primary decomposition for any power of a transversal polymatroidal ideal. This improves \cite[Lemma 3.2]{CH}, where the authors could give only a primary decomposition of a transversal polymatroidal ideal, which in general was far from being irredundant. Our proof uses their primary decomposition.

\begin{Corollary}
\label{primarydecomp}
Let $I\subset S$ be a transversal polymatroidal ideal with the set of associated prime ideals $\Ass(I)=\{P_1,\ldots,P_l\}$. Consider $\MT_1,\ldots,\MT_l$  maximal trees of $G_I$ such that $P_j=P_{\MT_j}$ for all $j=1,\ldots,l$. Then
\[
I^k=\Sect_{j=1}^l P_j^{ka_j},
\]
is an irredundant primary decomposition of $I^k$ for any $k\geq 1$, where $a_j=|\MV(\MT_j)|$ for all $j$.
\end{Corollary}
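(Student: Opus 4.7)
My plan is to start from the (in general redundant) primary decomposition of $I^k$ furnished by \cite[Lemma 3.2]{CH} and prune it down to $\bigcap_{j=1}^l P_j^{ka_j}$. This exploits the fact, used throughout the section, that $I^k$ is itself a transversal polymatroidal ideal, being a product of $rk$ monomial prime ideals; thus \cite[Lemma 3.2]{CH} applies directly to $I^k$ and produces a primary decomposition whose components have the shape $P_\MT^{k|\MV(\MT)|}$, with $\MT$ ranging over a collection of trees in $G_I$.

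The pruning step rests on a single containment: if $\MT \subsetneq \MT'$ are two trees in $G_I$ whose associated primes agree, $P_\MT = P_{\MT'} = P$, then $|\MV(\MT)| < |\MV(\MT')|$ forces $P^{k|\MV(\MT')|} \subsetneq P^{k|\MV(\MT)|}$, so in the intersection the component $P^{k|\MV(\MT)|}$ may be discarded. Iterating this absorption leaves exactly one surviving component per associated prime, indexed by a maximal tree $\MT_j$ with $P_{\MT_j}=P_j$, and equal to $P_j^{ka_j}$. Writing $V_j = \{\ell \in [n] : x_\ell \in P_j\}$, Theorem~\ref{asstransversal} tells us that a tree $\MT$ satisfies $P_\MT=P_j$ iff $\bigcup_{i\in\MV(\MT)} F_i = V_j$; moreover, any further $i\in[r]$ with $F_i \subseteq V_j$ is automatically adjacent in $G_I$ to some vertex of $\MT$ (each element of $F_i$ already lies in some $F_{i'}$ with $i'\in\MV(\MT)$). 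Consequently $\MV(\MT_j) = \{i \in [r] : F_i \subseteq V_j\}$, so the exponent $a_j = |\MV(\MT_j)|$ depends only on $P_j$ and not on the choice of maximal tree. This yields $I^k = \bigcap_{j=1}^l P_j^{ka_j}$.

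Irredundancy is then automatic: each $P_j^{ka_j}$ is $P_j$-primary because $P_j$ is a monomial prime, the $P_j$ are pairwise distinct by hypothesis, and by Corollary~\ref{astab1} they exhaust $\Ass(I^k) = \Ass(I)$. Any shorter primary decomposition would therefore have to miss some $P_j \in \Ass(I^k)$, which is impossible.

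The most delicate point I anticipate is the bookkeeping in the pruning step, specifically verifying that iterated absorption of sub-tree components really terminates at exactly the maximal-tree components with the invariantly defined exponents $ka_j$. A self-contained alternative, bypassing \cite[Lemma 3.2]{CH}, would be a direct Hall-type argument on exponent vectors: a monomial $u = \prod_\ell x_\ell^{e_\ell}$ lies in $I^k = \prod_{i=1}^r P_{F_i}^k$ iff $\sum_{\ell \in \bigcup_{i\in S} F_i} e_\ell \geq k|S|$ for every nonempty $S \subseteq [r]$. By the disjointness of variable sets over distinct connected components of $G_I[S]$, combined with Theorem~\ref{asstransversal}, this collection of inequalities reduces to the hypothesis $u \in P_j^{ka_j}$ for every $j = 1,\ldots,l$.
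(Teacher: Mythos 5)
Your argument is essentially the paper's: both start from the primary decomposition of \cite[Lemma 3.2]{CH}, pin down the exponent attached to each associated prime by showing that a maximal tree realizing $P_j$ must contain every vertex $i$ with $F_i\subseteq V_j$ (the paper phrases this as connectedness of the induced subgraph on those vertices, so $a_j$ depends only on $P_j$, and the maximal trees in $G_{I^k}$ then have $ka_j$ vertices), and obtain irredundancy because the components are primary with pairwise distinct radicals exhausting $\Ass(I^k)=\Ass(I)$ via Corollary~\ref{astab1}. One correction: \cite[Lemma 3.2]{CH} gives components $(\sum_{i\in A}P_{F_i})^{|A|}$ indexed by \emph{all} nonempty subsets $A$ of the prime factors, not only by trees, so your pruning must also discard components whose radical is not an associated prime (for instance those coming from subsets that are disconnected in $G_I$); this is exactly the standard extraction of an irredundant decomposition that the paper invokes in the same terse way, so the repair is routine and does not change the structure of your proof.
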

\begin{proof}
First we recall that for the transversal polymatroidal ideal $I=P_{F_1}\cdots P_{F_r}$ we have the following primary decomposition (see \cite[Lemma 3.2]{CH})
\begin{eqnarray}
\label{primdec}
I=\Sect_{A\subset [r]\atop A\neq\emptyset} (\sum_{i\in A}P_{F_i})^{|A|}.
\end{eqnarray}
Since $(\sum_{i\in A}P_{F_i})^{|A|}$ is $\sum_{i\in A}P_{F_i}$-primary and $\Ass(I)=\{P_1,\ldots,P_l\}$ it follows that
\[
I=\Sect_{j=1}^l (\Sect_{A\subset [r]\atop \sum_{i\in A}P_{F_i}=P_j} P_j^{|A|}).
\]
In order to obtain the desired irredundant primary decomposition it remains to be shown that $a_j\geq |A|$ for all $A$ with $\sum_{i\in A}P_{F_i}=P_j$.
To see this, let $\MH$  be the induced subgraph of $G_I$ with vertex set $\MV(\MH)=\{i\:\; i\in\MV(G_I) \text{ and } P_{F_i}\subset P_j\}$. Then we have $A\subset \MV(\MH)$. Since $P_j=\sum_{i\in\MV(\MT_j)} P_{F_i}$ it follows that $\MT_j$ is also a tree of $\MH$.

We show that $\MH$ is connected. Indeed, let $i,s$ be two vertices of $\MH$. Since $P_{F_i}\subset \sum_{i\in\MV(\MT_j)} P_{F_i}$, it follows that there exists an integer ${i_0}$ such that $F_i\sect F_{i_0}\neq\emptyset$.  Therefore, $\{i,i_0\}$ is an edge of $\MH$. Similarly, we see that there exists an integer $s_0$ such that $\{s,s_0\}$ is an edge of $\MH$. Hence  there exists a path from $i$ to $s$ and this yields the desired conclusion.

It follows that $\MV(\MT_j)=\MV(\MH)$. Therefore, $|A|\leq |\MV(\MT_j)|=a_j$.

The irredundant primary decomposition for $I^k$ follows at once, since the maximal trees in $G_{I^k}$ that realize $P_j$ have $ka_j$ vertices.
\end{proof}

\begin{Example}
\label{compasstransversal}
{\em For the ideal $I$ introduced in Example~\ref{transversalgraph} we have computed in Example~\ref{assexample} the set of associated prime ideals
\begin{eqnarray*}
\Ass(I)&=&\{P_1,\ldots,P_6\}\\
&=&\{(x_1,x_2),(x_1,x_2,x_3,x_4),(x_3,x_5),(x_4,x_5),(x_3,x_4,x_5),(x_1,x_2,x_3,x_4,x_5)\}.
\end{eqnarray*}
We noticed there that different trees may determine  the same associated prime ideal. For example, $P_6=\mm$ is determined by all spanning trees of $G_I$, all trees of $G_I$ with $3$ vertices and the following trees with $2$ vertices: $\{2,4\}$, $\{2,3\}$. Therefore $a_6=4$, and  for $\MT_6$ we can choose any spanning tree of $G_I$. For each of the trees $\MT_1,\ldots,\MT_5$  there is only one choice.  These trees are determined by their sets of vertices:
\[
\MV(\MT_1)=\{1\},\quad  \MV(\MT_2)=\{1,2\},\quad \MV(\MT_3)=\{3\},\quad \MV(\MT_4)=\{4\},\quad \MV(\MT_5)=\{3,4\}.
\]
Associated with these trees we have $a_1=1$, $a_2=2$, $a_3=1$, $a_4=1$ and $a_5=2$. It follows now from Corollary~\ref{primarydecomp} that the corresponding irredundant primary decomposition of $I$ is
\[
I=P_1\sect P_2^2\sect P_3\sect P_4\sect P_5^2\sect P_6^4.
\]}
\end{Example}

\medskip
By using the graph $G_I$ of a transversal polymatroidal ideal $I$ we get also a formula for $\depth S/I$. More precisely we have

\begin{Theorem}
\label{depthtransversal}
Let $I=P_{F_1}\cdots P_{F_r}\subset S$ be a transversal polymatroidal ideal. Then
\[
\depth S/I= c(G_I) - 1 + n - |\Union_{i=1}^r F_i|,
\]
where by $c(G_I)$ we denote  the number of connected components of the graph $G_I$.
\end{Theorem}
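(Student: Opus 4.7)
My plan is to first reduce to the case $\bigcup_{i=1}^r F_i=[n]$ and then induct on $k:=c(G_I)$. For the reduction, if $F:=\bigcup F_i\subsetneq[n]$ then every generator of $I$ lies in $S':=K[x_i:i\in F]$, so $S/I\cong (S'/I)\otimes_K K[x_i:i\notin F]$. Since depth is additive under tensor products over $K$ of standard graded algebras, this contributes the expected summand $n-|F|$ and reduces the statement to proving $\depth S/I=c(G_I)-1$ when $\bigcup F_i=[n]$ (the graph $G_I$ is unaffected by the reduction).

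The base case $k=1$ is immediate from Theorem~\ref{maxgraphconnected}: $G_I$ connected together with $\bigcup F_i=[n]$ forces $\mm\in\Ass(I)$, so $\depth S/I=0$, matching the formula.

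For the inductive step, let $V_1,\ldots,V_k$ be the vertex sets of the connected components of $G_I$ and put $H_j:=\bigcup_{i\in V_j}F_i$. Because vertices in different components correspond to sets with empty intersection, the $H_j$ are pairwise disjoint and partition $[n]$. Split off the last component by setting $J:=\prod_{i\in V_1\cup\cdots\cup V_{k-1}}P_{F_i}$ and $L:=\prod_{i\in V_k}P_{F_i}$, so that $I=JL$. Since $J$ and $L$ are monomial ideals in disjoint variable sets, the elementary identity $JS\cap LS=JL\cdot S=I$ yields the Mayer--Vietoris sequence
\[
0 \To S/I \To S/JS \oplus S/LS \To S/(J+L)S \To 0.
\]
Write $T:=K[x_i:i\notin H_k]$ and $S_k:=K[x_i:i\in H_k]$, so $S\cong T\otimes_K S_k$ and all three later terms become $K$-tensor products of $T$- and $S_k$-modules. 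By the induction hypothesis applied to $J\subset T$ (whose graph has $k-1$ components covering the variable set of $T$), $\depth_T T/J=k-2$, and by the base case $\depth_{S_k}S_k/L=0$. Depth-additivity for $K$-tensor products then gives
\[
\depth S/JS=(k-2)+|H_k|,\qquad \depth S/LS=n-|H_k|,\qquad \depth S/(J+L)S=k-2.
\]

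Since each $|H_j|\geq 1$, both middle depths are at least $k-1$, so the depth lemma on the short exact sequence yields $\depth S/I\geq\min(k-1,(k-2)+1)=k-1$. For the reverse inequality, the companion depth-lemma bound $\depth S/(J+L)S\geq\min(\depth S/I-1,\depth(S/JS\oplus S/LS))$ becomes $k-2\geq\min(\depth S/I-1,\geq k-1)$, which forces $\depth S/I\leq k-1$. The two principal facts I invoke without reproving are the depth-additivity formula $\depth_{A\otimes_K B}(M\otimes_K N)=\depth_A M+\depth_B N$ for finitely generated graded modules over standard graded $K$-algebras (a Künneth/Koszul computation) and the disjoint-variables monomial identity $JS\cap LS=JL\cdot S$; given these, the induction closes cleanly, and the real combinatorial content has already been packaged in Theorem~\ref{maxgraphconnected}, which powers the base case.
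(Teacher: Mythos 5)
Your proof is correct and follows essentially the same route as the paper: reduce to $\bigcup_{i}F_i=[n]$, induct on $k=c(G_I)$ with Theorem~\ref{maxgraphconnected} as base case, and in the inductive step combine the short exact sequence coming from $I=JS\sect LS$ with depth additivity of $K$-tensor products and the Depth Lemma. The only difference is a streamlining: by noting that the middle term has depth at least $k-1>k-2$ (since every component covers at least one variable), you treat uniformly what the paper splits into two cases, handling separately the situation where the smallest component involves a single variable via the sequence $0\to S/(I:(x_1))\to S/I\to S/(I+(x_1))\to 0$.
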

\begin{proof}
We may assume that $\Union_{i=1}^r F_i=[n]$. Indeed, let $A=\Union_{i=1}^r F_i$. Since $I=JS$ for the polymatroidal ideal  $J\subset S(P_A)$ with $G(J)=G(I)$, we have
\[
\depth S/I=\depth S(P_A)/J + n - |A|.
\]
The graphs $G_I$ and $G_J$ are identical, therefore $c(G_I)=c(G_J)$. Consequently, $\depth S(P_A)/J=c(G_J)-1$ implies the desired formula for $\depth S/I$.

Let $k=c(G_I)$. We prove the statement by induction on $k$. If $k=1$,  then $G_I$ is connected. By applying Theorem~\ref{maxgraphconnected} we obtain $\mm\in\Ass(I)$. Hence $\depth S/I=0$, as desired. Assume now that $k\geq 2$, and let $G_1,\ldots, G_k$ be the connected components of $G_I$. As in the proof of Theorem~\ref{asstransversal} we denote by $I_1,\ldots,I_k$ the transversal polymatroidal ideals for which the associated graphs are the connected components of $G_I$. Hence $I=I_1\cdots I_k=I_1\sect\ldots\sect I_k$. Without loss of generality we may assume that $1\leq l_1\leq \ldots \leq l_k$, where for all $j$ $$l_j=|\Union_{i\in \MV(G_j)} F_i|,$$ and that $\Union_{i\in \MV(G_1)} F_i=\{1,\ldots,l_1\}$. Observe that $l_1+\cdots+l_k=n$.

We have two cases to analyze. First we treat the case $l_1=1$. This implies that the ideal $I_1$ is generated by  $x_1$. If for all $j=1,\ldots,k$ we have $l_j=1$ then $k=n$ and the ideal $I$ is principal. Therefore, $\depth S/I=n-1$, as desired. Otherwise $l_k\geq 2$ and $k\leq n-1$. Consider the short exact sequence
\[
0\To  S/(I:(x_1)) \To S/I \To S/(I+(x_1)) \To 0.
\]
Since $S/(I+(x_1))=S/(x_1)$ it follows that $\depth S/(I+(x_1))=n-1$. We also have that $I:(x_1)=I_2\cdots I_k$. By induction hypothesis $\depth S/(I:(x_1))=k-1-1+n-(n-1)=k-1\leq n-2$. By applying Depth Lemma (see \cite[Proposition 1.2.9]{BH}) we obtain that $\depth S/I=k-1$, as desired.

Consider now the second case, that is $l_1\geq 2$. We use the following short exact sequence
\[
0\To  S/I \To  S/{I_1}\dirsum S/(I_2\sect\cdots\sect I_k) \To S/(I_1+ I_2\sect\cdots\sect I_k)  \To 0.
\]
By induction hypothesis we have that $\depth S/{I_1}=n-l_1$ and $\depth S/(I_2\sect\cdots\sect I_k)=k-2+l_1$, since $I_2\sect\cdots\sect I_k=I_2\cdots I_k$. It follows from $\Union_{i=1}^r F_i=[n]$ that $l_1+\ldots+l_k=n$ and consequently that $kl_1\leq n$. Therefore we have $n-l_1\geq k$ and $k-2+l_1\geq k$. This implies that
\[
\depth (S/{I_1}\dirsum S/(I_2\sect\cdots\sect I_k))=\min\{\depth S/{I_1},\depth S/(I_2\sect\cdots\sect I_k)\}\geq k.
\]
Since the ideals $I_1$ and $I_2\cdots I_k=I_2\sect\cdots\sect I_k$ are generated in disjoint sets of variables we obtain that
\[
S/(I_1+ I_2\cdots I_k)\iso S_1/I_1\otimes_{K} S_2/(I_2\cdots I_k),
\]
where $S_1=K[x_1,\ldots,x_{l_1}]$ and $S_2=K[x_{l_1+1},\ldots,x_n]$. Therefore, by the additivity of depth and using the induction hypothesis we have
\[
\depth S/(I_1+ I_2\cdots I_k)=\depth S_1/I_1 + \depth S_2/(I_2\cdots I_k)=0+(k-2)=k-2.
\]
By applying again Depth Lemma we get that $\depth S/I=k-1$, as desired.
\end{proof}

\begin{Remark}
{\em In \cite[Theorem 12.6.7]{HH} the authors classify all Cohen--Macaulay polymatroidal ideals, which turn out to be the principal ideals, the Veronese ideals and the squarefree Veronese ideals. In the special case of transversal polymatroidal ideal one may derive  as a consequence of Theorem~\ref{depthtransversal} and Theorem~\ref{asstransversal} the above result and  obtains that the Cohen--Macaulay transversal polymatroidal ideals are the principal ideals and the Veronese ideals. Indeed, notice  that  Theorem~\ref{asstransversal} implies  $$\dim S/I=n-\min\{|F_i|\:\; i=1,\ldots,r\}.$$
We denote by $a$ the minimal cardinality of a set $F_i$, where $i=1,\ldots,r$. Hence $\dim S/I=n-a$.  We may assume that $\Union_{i=1}^r F_i=[n]$,  and then it follows from Theorem~\ref{depthtransversal}  that $S/I$  is Cohen--Macaulay if and only if $n-a=k-1$, where $k$ represents the number of connected components of $G_I$. Since $n\geq ka$ it follows that $k-1=n-a\geq ka-a=(k-1)a$. Therefore we obtain that this inequality is valid either if $k=1$ or $a=1$. If $k=1$, then $a=n$ and consequently $|F_i|=n$, for all $i=1,\ldots,r$. This implies that $F_i=[n]$ for all $i$ and hence $I=\mm^r$, the Veronese ideal. Otherwise $a=1$ and then $k=n$. In this  case  $G_I$ has $n$ connected components. Hence we obtain that $r=n$ and $|F_i|=1$ for all $i=1,\ldots,n$. This yields that $I$ is a principal ideal.
}
\end{Remark}
As a consequence of Theorem~\ref{depthtransversal} we obtain that $\depth S/I^k$, as a function of $k$, is constant for any transversal polymatroidal ideal and hence we may also compute the analytic spread of $I$.

\begin{Corollary}
\label{limitdepthtran}
Let $I\subset S$ be a transversal polymatroidal ideal. Then $\depth S/I=\depth S/I^k$ for all $k\geq 1$. In particular, we have $\depth S/I=\lim_{k\to \infty}\depth S/I^k$ and $\ell(I)= n-\depth S/I$.
\end{Corollary}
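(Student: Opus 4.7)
The plan is to reduce the statement for $I^k$ to Theorem~\ref{depthtransversal} by showing that taking powers does not change the two numerical invariants appearing in that formula. For the analytic spread part, I will then invoke Corollary~\ref{limitdepth}.

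First I would observe that if $I=P_{F_1}\cdots P_{F_r}$, then
\[
I^k=\prod_{i=1}^r P_{F_i}^k=P_{F_1}\cdots P_{F_1}\cdots P_{F_r}\cdots P_{F_r},
\]
where each factor $P_{F_i}$ is repeated $k$ times. Thus $I^k$ is again a transversal polymatroidal ideal whose defining collection of subsets of $[n]$ consists of $k$ copies of each $F_i$. In particular, the union $\bigcup F_i$ used in Theorem~\ref{depthtransversal} is unchanged when we pass from $I$ to $I^k$.

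Next I would compare the graphs $G_I$ and $G_{I^k}$. The graph $G_{I^k}$ has $kr$ vertices, namely a block of $k$ vertices $i^{(1)},\ldots,i^{(k)}$ for each $i\in\{1,\ldots,r\}$. Since $F_i\cap F_i=F_i\neq\emptyset$, all $k$ vertices in a single block are pairwise adjacent; and a vertex of block $i$ is adjacent to a vertex of block $j$ (for $i\neq j$) iff $F_i\cap F_j\neq\emptyset$, i.e.\ iff $\{i,j\}$ is an edge of $G_I$. Consequently $G_{I^k}$ is the graph obtained from $G_I$ by replacing each vertex with a clique of size $k$ and joining two cliques by a complete bipartite graph whenever the corresponding vertices are adjacent in $G_I$. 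This operation preserves connectedness of each component, and hence
\[
c(G_{I^k})=c(G_I).
\]
Applying Theorem~\ref{depthtransversal} to $I$ and to the transversal polymatroidal ideal $I^k$ we obtain
\[
\depth S/I^k=c(G_{I^k})-1+n-\Bigl|\bigcup_{i=1}^r F_i\Bigr|=c(G_I)-1+n-\Bigl|\bigcup_{i=1}^r F_i\Bigr|=\depth S/I,
\]
which proves the first claim.

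Since the depth function is constant, its limit equals $\depth S/I$. By Corollary~\ref{limitdepth} applied to the (polymatroidal) ideal $I$, we have $\lim_{k\to\infty}\depth S/I^k=n-\ell(I)$, so $\ell(I)=n-\depth S/I$. The only nontrivial ingredient is the invariance $c(G_{I^k})=c(G_I)$, which is the main (and essentially only) step; everything else is a direct application of earlier results.
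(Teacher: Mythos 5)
Your proof is correct and follows essentially the same route as the paper: the paper likewise notes that $c(G_{I^k})=c(G_I)$ (the graph $G_{I^k}$ being the $k$-th expansion of $G_I$), applies Theorem~\ref{depthtransversal} to both $I$ and $I^k$, and then uses Corollary~\ref{limitdepth} to get $\ell(I)=n-\depth S/I$. Your explicit verification of the expansion structure of $G_{I^k}$ is just a more detailed justification of a fact the paper states without proof.
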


\begin{proof}
Since $c(G_I)=c(G_{I^k})$ for any $k\geq 1$, then by applying Theorem~\ref{depthtransversal} we obtain that $\depth S/I=\depth S/I^k$ for all $k\geq 1$. Therefore we also have $$\lim_{k\to \infty}\depth S/I^k=\depth S/I.$$ By Corollary~\ref{limitdepth} we get the desired formula for $\ell(I)$.
\end{proof}

\bigskip

\section{Ideals of Veronese type}

Fix a positive integer $d$ and non-negative integers $a_1,\ldots,a_n$ with $a_1+\cdots +a_n\geq d$. Let $B\subset\ZZ_{+}^n$ be the set of vectors $u\in\ZZ_{+}^n$ with $u(i)\leq a_i$ for all $i=1,\ldots,n$ and with $|u|= d$. Then $B$ represents the set of bases of a discrete polymatroid $\MP$ on the ground set $[n]$, of rank $d$, which is called a {\em discrete polymatroid of Veronese type}. Its polymatroidal ideal $I\subset S$ is called an {\em ideal of Veronese type} and will be denoted by $I_{d;a_1,\ldots,a_n}$. The following result is an immediate consequence of the definition of an ideal of Veronese type and of Proposition~\ref{polymlocalization}.
	
\begin{Lemma}
\label{basicproperties}
Let $I=I_{d;a_1,\ldots,a_n}\subset S$ be an ideal of Veronese type. Then we have
\begin{enumerate}
\item[(a)] $I^k=I_{kd;ka_1,\ldots,ka_n}$ for every integer $k\geq 1$;
\item[(b)] $I_{\{i\}}= I_{d-b_i;a_1,\ldots,0,\ldots,a_n}\subset K[\{x_j\:\; j\neq i\}]$, where $b_i$ is the maximal degree of the variable $x_i$ in a minimal generator of $I$.
\end{enumerate}
\end{Lemma}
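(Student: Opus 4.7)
For part (a), I will establish the two inclusions $I^k \subseteq I_{kd;ka_1,\ldots,ka_n}$ and vice versa. The inclusion $\subseteq$ is immediate from the definition of a product ideal: every minimal generator of $I^k$ has the form $x^{v_1+\cdots+v_k}$ with $v_j \in B(\MP)$, whence $|v_1+\cdots+v_k| = kd$ and $(v_1+\cdots+v_k)(i) = \sum_j v_j(i) \le ka_i$. Thus $x^{v_1+\cdots+v_k}$ lies in $I_{kd;ka_1,\ldots,ka_n}$.

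For the reverse inclusion I will proceed by induction on $k$, the case $k=1$ being trivial. In the inductive step, given $w \in \ZZ_+^n$ with $|w|=kd$ and $w(i)\le ka_i$, it suffices to produce a vector $v \in B(\MP)$ with $v \le w$ componentwise and such that $w-v$ satisfies the Veronese-type conditions for $k-1$ (that is, $|w-v|=(k-1)d$ and $(w-v)(i) \le (k-1)a_i$); for then by induction $x^{w-v} \in I^{k-1}$, and $x^w = x^v \cdot x^{w-v} \in I \cdot I^{k-1}$. The required $v$ is an integer solution to
\[
L_i \le v(i) \le U_i,\qquad |v|=d,
\]
where $L_i = \max\{0,\, w(i)-(k-1)a_i\}$ and $U_i = \min\{a_i,\, w(i)\}$. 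The bound $L_i \le U_i$ is immediate from $0 \le w(i) \le ka_i$, and one gets such an integer $v$ by a greedy argument once $\sum L_i \le d \le \sum U_i$ is verified. This last verification is the main obstacle, but it follows by partitioning $[n]$ according to whether $w(i) \le (k-1)a_i$ (respectively $w(i) \le a_i$), using $\sum_i w(i) = kd$ together with the standing hypothesis $\sum_i a_i \ge d$, and doing a short case analysis.

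Part (b) follows directly from Proposition~\ref{polymlocalization}. By that result, $I_{\{i\}}$ is polymatroidal, generated in degree $d - b_i$ with $b_i = \max\{u(i) : u \in B(\MP)\}$, and its set of bases consists of the vectors obtained by deleting the $i$th coordinate from those $u \in B(\MP)$ with $u(i)=b_i$. For the Veronese-type polymatroid $\MP$, the value $u(i) = \min\{a_i, d\}$ is attained by distributing the remaining mass $\max\{0, d-a_i\}$ over the coordinates $j \neq i$ (possible because $\sum_{j \neq i} a_j \ge d-a_i$), so $b_i = \min\{a_i, d\}$. The resulting set of reduced bases is exactly $\{u' \in \ZZ_+^{n-1} : |u'|=d-b_i,\ u'(j)\le a_j \text{ for } j\neq i\}$, since any such $u'$ extends back to $B(\MP)$ by inserting $b_i$ in the $i$th slot. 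This identifies $I_{\{i\}}$ with the Veronese-type ideal $I_{d-b_i;a_1,\ldots,0,\ldots,a_n}$ in $K[x_j : j \neq i]$, as claimed.
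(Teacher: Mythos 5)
Your proof is correct. Note that the paper offers no proof of this lemma at all -- it is simply declared an immediate consequence of the definition of Veronese type ideals and of Proposition~\ref{polymlocalization} -- so your part (b) is exactly the intended route (localize via Proposition~\ref{polymlocalization}, observe $b_i=\min\{a_i,d\}$ using $\sum_{j\neq i}a_j\geq d-a_i$, and check that the reduced bases are precisely the integer vectors $u'$ with $|u'|=d-b_i$ and $u'(j)\leq a_j$), while your part (a) supplies the direct decomposition argument that the paper leaves implicit. The only step you leave as a sketch, $\sum_i L_i\leq d\leq \sum_i U_i$, in fact closes in one line and needs neither a case analysis nor the hypothesis $\sum_i a_i\geq d$: from $0\leq w(i)\leq ka_i$ one gets $\max\{0,\,w(i)-(k-1)a_i\}\leq w(i)/k\leq \min\{a_i,\,w(i)\}$ for every $i$, and summing over $i$ and using $|w|=kd$ yields both inequalities at once. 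With that, your greedy choice of an integer vector $v$ with $L_i\leq v(i)\leq U_i$ and $|v|=d$ goes through and the induction on $k$ is complete.
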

	
There are three particular cases of ideals of Veronese type that we will consider first. The first case is when $d=\sum_{i=1}^n a_i$, that is $I=I_{d;a_1,\ldots,a_n}$ is a principal ideal. Then  $\Ass^{\infty}(I)=\Ass(I)=\{(x_{i_1}),\ldots,(x_{i_r})\}$, where $a_{i_1},\ldots,a_{i_r}$ are all the nonzero integers from $a_1,\ldots,a_n$. Moreover, we have $\depth S/I=n-1$, $\lim_{k\to \infty}\depth S/I^k=n-1$, $\ell(I)=1$ and $\dstab(I)=\astab(I)=1$.

The second case is when $d=1$. Then  $I=I_{1;a_1,\ldots,a_n}$ is a monomial prime ideal. Therefore $\Ass^{\infty}(I)=\Ass(I)=\{I\}$. Furthermore, we have $\depth S/I=n-\height I$, $\lim_{k\to \infty}\depth S/I^k=n-\height I$, $\ell(I)=\height I$ and $\dstab(I)=\astab(I)=1$.

The third  case to be considered is  when there exists $i\in [n]$ such that $a_i=0$. Let $A$ be the subset of $[n]$ defined as $A=\{j\:\; a_j\neq 0\}$. Then $A\neq\emptyset$ and $G(I)\subset S(P_A)$, where $I=I_{d;a_1,\ldots,a_n}$. By the convention made before Lemma~\ref{localass} we identify $I$ with $G(I)S(P_A)$. For simplicity of notation we denote by $J\subset S(P_A)$ the ideal of Veronese type $G(I)S(P_A)$. It follows then that
\[
\Ass^{\infty}_S(I)=\Ass^{\infty}_{S(P_A)}(J).
\]
Furthermore, we have $\astab(I)=\astab(J)$ and $\depth S/I=\depth S(P_A)/J + n - |A|$. In addition, since $I^k$ can be identified with $J^k$, we also have that $\lim_{k\to \infty}\depth S/I^k=\lim_{k\to \infty}\depth S(P_A)/J^k + n - |A|$, $\ell(I)=\ell(J)$ and $\dstab(I)=\dstab(J)$.

\medskip
Due to this considerations we may assume  throughout the rest of this section that $I=I_{d;a_1,\ldots,a_n}\subset S$ is a Veronese type ideal satisfying
\begin{eqnarray}
\label{generalveronese}
d<\sum_{i=1}^n a_i \quad \text{and} \quad d>1 \quad \text{and} \quad a_1,\ldots,a_n\geq 1.
\end{eqnarray}
We recall, that for such ideals of Veronese type there is a precise description of the associated prime ideals given in \cite[Proposition 3.1]{Vl}.

\begin{Proposition}
\label{assveronese}
Let $I=I_{d;a_1,\ldots,a_n}\subset S$ be an ideal of Veronese type satisfying {\em(\ref{generalveronese})} and $A$ a subset of $[n]$. Then
\[
P_A\in\Ass(I) \Longleftrightarrow \sum_{i=1}^n a_i\geq d-1+|A| \quad \text{ and } \quad \sum_{i\not\in A} a_i\leq d-1.
\]
\end{Proposition}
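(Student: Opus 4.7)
My plan is to reduce the statement to identifying when the graded maximal ideal of a polynomial ring belongs to the set of associated primes of a Veronese type ideal living in that ring, and then to resolve that question by a direct combinatorial argument.

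By Lemma~\ref{localass}(a), the condition $P_A\in\Ass(I)$ is equivalent to $\depth S(P_A)/I(P_A)=0$, i.e.\ to $\mm_A\in\Ass(I(P_A))$, where $\mm_A$ denotes the graded maximal ideal of $S(P_A)$. The next step is to describe $I(P_A)$ explicitly: iterating Lemma~\ref{basicproperties}(b), or inspecting which monomials of $I$ have smallest degree after setting $x_i=1$ for $i\notin A$, one checks that if $\sum_{i\notin A}a_i\geq d$ then $I(P_A)=S(P_A)$, in which case $P_A\notin\Ass(I)$ and the inequality $\sum_{i\notin A}a_i\leq d-1$ of the proposition also fails; otherwise $I(P_A)$ is the Veronese type ideal $I_{d';(a_i)_{i\in A}}$ in $S(P_A)$ with $d':=d-\sum_{i\notin A}a_i\geq 1$. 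Hence the problem reduces to the following sub-claim: for a Veronese type ideal $J=I_{d';a'_1,\ldots,a'_m}$ in $S'=K[y_1,\ldots,y_m]$ with $d'\geq 1$ and all $a'_i\geq 1$, the graded maximal ideal $\mm'$ of $S'$ lies in $\Ass(J)$ if and only if $\sum_{i=1}^m a'_i\geq d'-1+m$. Once this is granted, substituting $m=|A|$ and $d'=d-\sum_{i\notin A}a_i$ rewrites the inequality as $\sum_{i=1}^n a_i\geq d-1+|A|$ and yields the proposition.

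For the sub-claim I would use the standard criterion that $\mm'\in\Ass(J)$ if and only if there exists a monomial $y^u\in S'$ with $y^u\notin J$ and $y_iy^u\in J$ for every $i\in[m]$. Since $y^u\in J$ is equivalent to $\sum_j\min(u(j),a'_j)\geq d'$, these two conditions force $\sum_j\min(u(j),a'_j)\leq d'-1$ together with $\sum_{j\neq i}\min(u(j),a'_j)+\min(u(i)+1,a'_i)\geq d'$ for every $i$. Combining them yields $\min(u(i)+1,a'_i)-\min(u(i),a'_i)=1$ for all $i$, hence $u(i)\leq a'_i-1$, and also $|u|=d'-1$; conversely any such $u$ witnesses $\mm'\in\Ass(J)$. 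The existence of a nonnegative integer vector $u$ with $u(i)\leq a'_i-1$ and $|u|=d'-1$ is clearly equivalent to $\sum_i(a'_i-1)\geq d'-1$. The main obstacle is this final combinatorial step; the preceding reductions are essentially bookkeeping, and once the criterion for the maximal ideal is pinned down, the passage from $J$ back to $I$ is automatic.
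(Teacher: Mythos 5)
Your argument is correct. Note, however, that the paper does not prove Proposition~\ref{assveronese} at all: it is recalled as a known result, citing \cite[Proposition 3.1]{Vl}, so there is no internal proof to compare with. Your proof is a legitimate self-contained substitute, and it stays close to the toolkit the paper uses elsewhere: you reduce to the maximal-ideal case via Lemma~\ref{localass}(a), identify $I(P_A)$ as $I_{d';(a_i)_{i\in A}}$ with $d'=d-\sum_{i\notin A}a_i$ when $\sum_{i\notin A}a_i\le d-1$ (and as the unit ideal otherwise, which correctly kills both sides of the equivalence), and then settle the maximal-ideal case by the standard witness criterion $\mm'=J:y^u$. The key computation --- that $y^u\in J$ iff $\sum_j\min(u(j),a'_j)\ge d'$, whence a witness must satisfy $u(i)\le a'_i-1$ for all $i$ and $|u|=d'-1$, and conversely any such $u$ is a witness --- is right, and the existence of such a $u$ is indeed equivalent to $\sum_i a'_i\ge d'-1+m$, which translates back to $\sum_{i=1}^n a_i\ge d-1+|A|$. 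The only point worth making explicit is that $(\ref{generalveronese})$ guarantees $\sum_{i\in A}a_i\ge d'$, so that $I(P_A)$ is a genuine (nonzero, proper when $d'\ge1$) Veronese type ideal, and that the hypothesis $a_i\ge 1$ is what makes the sub-claim applicable after localization; you use both implicitly and they do hold. So the proposal gives a correct elementary proof of a statement the paper only imports from \cite{Vl}.
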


By using this result we prove the following

\begin{Proposition}
\label{stableassveronese}
Let $I=I_{d;a_1,\ldots,a_n}\subset S$ be an ideal of Veronese type satisfying {\em(\ref{generalveronese})}. Then $\Ass^{\infty}(I)=V^*(I)$.
\end{Proposition}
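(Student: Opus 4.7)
The plan is to establish both inclusions. The inclusion $\Ass^\infty(I)\subseteq V^*(I)$ is automatic, since any associated prime of $I^k$ contains $I^k$, hence contains $I$, and associated primes of a monomial ideal are themselves monomial primes. So the real work is showing $V^*(I)\subseteq \Ass^\infty(I)$, and my strategy is to apply Proposition~\ref{assveronese} to the powers $I^k$.

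First I would make $V^*(I)$ explicit. A monomial prime $P_A$ (with $\emptyset\neq A\subseteq[n]$) contains $I$ if and only if every generator $x^u\in G(I)$ uses some variable from $A$. Since the exponent vectors $u\in B(\MP)$ are exactly the integer vectors with $|u|=d$ and $u(i)\leq a_i$, the existence of a generator supported outside $A$ is equivalent to $\sum_{i\notin A}a_i\geq d$. Thus
\[
V^*(I)=\bigl\{P_A\: \emptyset\neq A\subseteq[n],\ \textstyle\sum_{i\notin A}a_i\leq d-1\bigr\}.
\]

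Next, by Lemma~\ref{basicproperties}(a) we have $I^k=I_{kd;ka_1,\ldots,ka_n}$, which still satisfies the hypothesis (\ref{generalveronese}) for every $k\geq 1$ (since $\sum ka_i>kd$, $kd>1$, and $ka_i\geq 1$). Applying Proposition~\ref{assveronese} to $I^k$ yields that $P_A\in\Ass(I^k)$ if and only if
\[
k\Bigl(\sum_{i=1}^n a_i-d\Bigr)\geq |A|-1 \qquad\text{and}\qquad k\Bigl(d-\sum_{i\notin A}a_i\Bigr)\geq 1.
\]

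Finally, I would fix any $P_A\in V^*(I)$ and observe that both inequalities can be satisfied by taking $k$ large. The second inequality holds for every $k\geq 1$, because $d-\sum_{i\notin A}a_i\geq 1$ by the characterization of $V^*(I)$ above. For the first inequality, the hypothesis $d<\sum_{i=1}^n a_i$ gives $\sum a_i-d\geq 1$, so the inequality is satisfied as soon as $k\geq |A|-1$. Consequently $P_A\in\Ass(I^k)$ for all $k\gg 0$, so $P_A\in\Ass^\infty(I)$, which completes the proof. There is no real obstacle here: once Proposition~\ref{assveronese} is in hand and the simple observation that it scales cleanly to powers via Lemma~\ref{basicproperties}(a), the statement falls out by elementary arithmetic.
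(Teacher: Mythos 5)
Your proof is correct, and it follows the paper's basic strategy of identifying $I^k$ with $I_{kd;ka_1,\ldots,ka_n}$ via Lemma~\ref{basicproperties}(a) and then applying Proposition~\ref{assveronese} to the power, but it differs in two small, genuinely useful ways. First, you describe $V^*(I)$ directly from the generators, showing $P_A\supseteq I$ if and only if $\sum_{i\notin A}a_i\leq d-1$, whereas the paper derives this inequality by choosing a minimal prime $P_B\in\Ass(I)$ with $P_B\subseteq P_A$ and invoking Proposition~\ref{assveronese} for $B$; the two are equivalent, but your version is self-contained. Second, and more substantively, you observe that both conditions of Proposition~\ref{assveronese} applied to $I^k$ are monotone in $k$ (the condition $k(d-\sum_{i\notin A}a_i)\geq 1$ holds for every $k\geq 1$, and $k(\sum_i a_i-d)\geq |A|-1$ holds once $k\geq |A|-1$ because $\sum_i a_i-d\geq 1$), so $P_A\in\Ass(I^k)$ for all sufficiently large $k$ and hence $P_A\in\Ass^\infty(I)$ outright. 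The paper instead verifies membership only at the single value $k=|A|-1$ and then appeals to the persistence property (Proposition~\ref{persistence}), which rests on the linear-resolution machinery for powers of polymatroidal ideals; your route makes the proposition purely arithmetic and independent of persistence, at no real cost. One cosmetic point: when $|A|=1$ the bound $k\geq |A|-1$ should be read as $k\geq 1$, which your argument covers since the first inequality is then vacuous.
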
	
\begin{proof}
It is obvious that $\Ass^{\infty}(I)\subset V^*(I)$. Conversely, let $P_A\in V^*(I)$ for some subset $A$ of $[n]$. Then there exists a minimal prime ideal $P_B\in\Ass(I)$ such that $P_B\subset P_A$. This implies that $B\subset A$ and furthermore, by applying Proposition~\ref{assveronese}, we have
\[
\sum_{i\not\in A} a_i\leq \sum_{i\not\in B} a_i\leq d-1.
\]
Consequently we get that for any integer $l\geq 1$
\[
\sum_{i\not\in A} la_i\leq l(d-1)\leq ld-1.
\]
Since $I$ satisfies (\ref{generalveronese}) we have $\sum_{i=1}^n a_i\geq d+1$. Then for $k=|A|-1$ we have that
\[
k(\sum_{i=1}^n a_i-d)\geq |A|-1.
\]
Therefore, we get that $\sum_{i=1}^n ka_i>kd-1+|A|$. Combining this inequality with $\sum_{i\not\in A} ka_i\leq kd-1$ and applying  Proposition~\ref{assveronese}, we obtain that $P_A\in\Ass(I_{kd;ka_1,\ldots,ka_n})$. Therefore, by Lemma~\ref{basicproperties}(a), we have $P_A\in\Ass(I^k)$. Thus we get $P_A\in\Ass^{\infty}(I)$, by the persistence property, as desired.
\end{proof}

It follows immediately from Proposition~\ref{stableassveronese} that $\Ass^{\infty}(I)$ is determined by the minimal prime ideals of $I$. According to Proposition~\ref{assveronese} these minimal prime ideals can be determined as follows: $P_F$ is a minimal prime ideal of $I$ if and only if $F$ is a minimal subset of $[n]$ with respect to inclusion satisfying the following inequalities
\[
\sum_{i\not\in F}a_i + \sum_{i\in F}(a_i-1)\geq d-1 \quad \text{and} \quad  \sum_{i\not\in F}a_i\leq d-1.
\]

\medskip
We can say somewhat more about the set $V^*(I)$ for $I=I_{d;a_1,a_2,\ldots,a_n}$. Without  any loss of generality we may assume that  $a_1\geq a_2\geq \cdots \geq a_n$. We will  use the following facts:
\begin{itemize}
\item[(i)] (\cite[Lemma 2.1]{Vl})  $\sqrt{I}$ is squarefree strongly stable, that is, for all monomials $x_F\in I$ and all integers $1\leq i<j\leq n$ such that $j\in F$ and $i\not\in F$ it follows that $x_{(F\setminus\{j\})\union \{i\}}\in I$. Here $x_F=\prod_{i\in F}x_i$ for $F\subset [n]$.
\item[(ii)] (\cite[Lemma 2.3]{Vl})  Let $J$ be a squarefree strongly stable ideal, then the Alexander dual $J^\vee$ of $J$ is also squarefree strongly stable.
\item[(iii)] $P_F\in V^*(J)$ if and only if $x_F\in J^\vee$ for any monomial ideal $J$.
\end{itemize}

Combining (i), (ii) and (iii) we obtain

\begin{Proposition}
\label{combining}
Let $I=I_{d;a_1,\ldots,a_n}$ be an ideal of Veronese type with $a_1\geq a_2\geq \cdots \geq a_n$. Then for all $P_F\in V^*(I)$ and $1\leq i<j\leq n$ with $j\in F$ and $i\not\in F$ it follows that $P_{(F\setminus\{j\})\union \{i\}}\in V^*(I)$.
\end{Proposition}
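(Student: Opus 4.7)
The plan is to string together the three bulleted facts that precede the statement, with one small observation added at the beginning: the set $V^*(I)$ depends only on $\sqrt{I}$, because a monomial prime ideal contains $I$ if and only if it contains $\sqrt{I}$. So without loss of generality we may replace $I$ by $\sqrt{I}$ throughout the argument.

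First I would record this reduction: $V^*(I)=V^*(\sqrt{I})$. Then, using (i), the ideal $\sqrt{I}$ is squarefree strongly stable with respect to the given ordering $a_1\geq a_2\geq \cdots \geq a_n$. Applying (ii) to $J=\sqrt{I}$, I obtain that $(\sqrt{I})^\vee$ is again squarefree strongly stable. Now I translate the hypothesis $P_F\in V^*(I)$ into a membership statement via (iii): $P_F\in V^*(\sqrt{I})$ is equivalent to $x_F\in (\sqrt{I})^\vee$.

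At this point the conclusion becomes immediate. Given indices $1\leq i<j\leq n$ with $j\in F$ and $i\notin F$, the squarefree strongly stable property of $(\sqrt{I})^\vee$ applied to the monomial $x_F$ yields $x_{(F\setminus\{j\})\cup\{i\}}\in (\sqrt{I})^\vee$. Applying (iii) in the reverse direction, this gives $P_{(F\setminus\{j\})\cup\{i\}}\in V^*(\sqrt{I})=V^*(I)$, which is exactly what we want.

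There is essentially no obstacle here; the proof is a purely formal consequence of the three facts quoted from \cite{Vl}, once one notices that $V^*$ is insensitive to passing to the radical. The only thing to double-check is that the ordering convention in (i) matches the one in the statement (i.e.\ that the "strong stability" moves an index $j$ down to a smaller index $i$, which is consistent with $a_1\geq \cdots \geq a_n$ and with the direction of the exchange claimed in the proposition).
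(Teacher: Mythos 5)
Your proposal is correct and is essentially the paper's own argument: the paper proves the proposition simply by the sentence ``Combining (i), (ii) and (iii) we obtain,'' i.e.\ exactly the chain $\sqrt{I}$ squarefree strongly stable, hence $(\sqrt{I})^\vee$ squarefree strongly stable, combined with the translation $P_F\in V^*(J)\iff x_F\in J^\vee$. Your explicit remark that $V^*(I)=V^*(\sqrt{I})$, and your check that the ordering $a_1\geq\cdots\geq a_n$ matches the direction of the exchange, are exactly the (implicit) details the paper leaves to the reader.
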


It is not the case, as one might expect,  that any set $\MF$ of  incomparable monomial prime ideals can be realized as the set of minimal prime ideals of an ideal of Veronese type.
For example, let $\MF=\{(x_1,x_2),(x_3,x_4)\}$. No matter which order of the variables we choose, the ideal $(x_1,x_2)(x_3,x_4)$ is never squarefree strongly stable with respect to the given order of the variables.

\medskip
We now  characterize  the ideals of Veronese type $I\subset S$ satisfying (\ref{generalveronese}) for which $\astab(I)=1$.
	
\begin{Corollary}
\label{stable1}
Let $I=I_{d;a_1,\ldots,a_n}\subset S$ be an ideal of Veronese type satisfying {\em(\ref{generalveronese})}. Then the following conditions are equivalent:
\begin{enumerate}
\item[(a)] $\mm\in\Ass(I)$;
\item[(b)] $\Ass(I)=\Ass^{\infty}(I)$;
\item[(c)] $\sum_{i=1}^n a_i\geq d-1+n$.
\end{enumerate}
\end{Corollary}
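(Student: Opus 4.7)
The plan is to prove $(a)\Leftrightarrow(c)$ directly from Proposition~\ref{assveronese} by specializing to $A=[n]$, and then to establish $(a)\Rightarrow(b)$ using Proposition~\ref{stableassveronese} together with the easy characterization of $V^*(I)$ in terms of the $a_i$'s. The reverse $(b)\Rightarrow(a)$ is immediate since $\mm\in V^*(I)$.

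\textbf{Step 1: $(a)\Leftrightarrow(c)$.} Apply Proposition~\ref{assveronese} with $A=[n]$. The condition $\sum_{i\notin[n]}a_i\leq d-1$ reduces to $0\leq d-1$, which is satisfied because of assumption (\ref{generalveronese}). Hence $\mm=P_{[n]}\in\Ass(I)$ if and only if $\sum_{i=1}^n a_i\geq d-1+n$.

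\textbf{Step 2: $(b)\Rightarrow(a)$.} Since $I\subset\mm$, we have $\mm\in V^*(I)$. Using Proposition~\ref{stableassveronese}, $\Ass^\infty(I)=V^*(I)\ni\mm$, and the hypothesis $(b)$ gives $\mm\in\Ass(I)$.

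\textbf{Step 3: $(c)\Rightarrow(b)$.} First I observe the following simple description: for any subset $A\subset[n]$,
\[
P_A\in V^*(I)\iff \sum_{i\notin A}a_i\leq d-1.
\]
Indeed, $P_A\supset I$ exactly when no minimal generator of $I$ lies entirely in the variables indexed by $[n]\setminus A$, which is equivalent to the non-existence of a vector $u\in\ZZ_+^n$ with $u(i)=0$ for $i\in A$, $u(i)\leq a_i$ for $i\notin A$, and $|u|=d$; such a $u$ exists precisely when $\sum_{i\notin A}a_i\geq d$. Now given $(c)$ and $P_A\in V^*(I)$, I have $\sum_{i\notin A}a_i\leq d-1$ by the above, and also $\sum_{i=1}^n a_i\geq d-1+n\geq d-1+|A|$ since $|A|\leq n$. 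Both conditions of Proposition~\ref{assveronese} are satisfied, so $P_A\in\Ass(I)$. Thus $V^*(I)\subset\Ass(I)$, and combined with the reverse inclusion $\Ass(I)\subset\Ass^\infty(I)=V^*(I)$ (from persistence and Proposition~\ref{stableassveronese}), we obtain $\Ass(I)=\Ass^\infty(I)$.

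There is no real obstacle; the result is essentially a direct specialization of Proposition~\ref{assveronese} together with Proposition~\ref{stableassveronese}. The only minor point requiring care is the characterization of $V^*(I)$ in Step~3, but this follows immediately from the definition of an ideal of Veronese type.
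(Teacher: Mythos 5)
Your proof is correct and follows essentially the same route as the paper: the equivalence $(a)\Leftrightarrow(c)$ is the specialization $A=[n]$ of Proposition~\ref{assveronese}, $(b)\Rightarrow(a)$ comes from Proposition~\ref{stableassveronese}, and the remaining implication is obtained by verifying both inequalities of Proposition~\ref{assveronese} for an arbitrary $P_A\in\Ass^\infty(I)=V^*(I)$. The only (harmless) difference is that you justify the inequality $\sum_{i\notin A}a_i\leq d-1$ by a direct combinatorial description of $V^*(I)$, whereas the paper extracts it from the proof of Proposition~\ref{stableassveronese} via a minimal prime $P_B\subset P_A$; both verifications are valid.
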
	

\begin{proof}
By applying Proposition~\ref{assveronese} we obtain that $(a)\Leftrightarrow(c)$, since $\mm=P_{[n]}$. The implication $(b)\implies(a)$ follows from Proposition~\ref{stableassveronese}. For $(a)\implies(b)$, let $P_A\in\Ass^{\infty}(I)$ for some subset $A$ of $[n]$. Since $\mm\in\Ass(I)$ we obtain that $\sum_{i=1}^n a_i\geq d-1+n$. Therefore $\sum_{i=1}^n a_i\geq d-1+|A|$. The inequality $\sum_{i\not\in A} a_i\leq d-1$ follows from the proof of Proposition~\ref{stableassveronese}. Hence, by applying again Proposition~\ref{assveronese}, we obtain that $P_A\in\Ass(I)$, as desired.
\end{proof}

In the following we give an upper bound for the index of stability of any prime $P\in\Ass^{\infty}(I)$ which we define to be  the smallest integer $k$ such that $P\in\Ass(I^k)$.

\begin{Corollary}
\label{primeindex}
Let $I=I_{d;a_1,\ldots,a_n}\subset S$ be an ideal of Veronese type satisfying {\em(\ref{generalveronese})} and $A$ a subset of $[n]$ with  $P_A\in\Ass^{\infty}(I)$. Then the index of stability of $P_A$ is equal to $\lceil \frac{|A|-1}{\sum_{i=1}^n a_i-d}\rceil$. In particular, $$\astab(I)=\lceil\frac{n-1}{\sum_{i=1}^n a_i-d}\rceil,$$ and $\astab(I)\leq n-1$.
\end{Corollary}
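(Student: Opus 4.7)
The plan is to translate the membership $P_A \in \Ass(I^k)$ into an explicit numerical inequality in $k$ using Lemma~\ref{basicproperties}(a) together with the characterization of associated primes from Proposition~\ref{assveronese}, then solve for the smallest such $k$.

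First I would apply Lemma~\ref{basicproperties}(a) to rewrite $I^k = I_{kd;ka_1,\ldots,ka_n}$, and then substitute into Proposition~\ref{assveronese}. This gives that $P_A \in \Ass(I^k)$ holds if and only if the two inequalities
\[
k\Bigl(\sum_{i=1}^n a_i - d\Bigr) \geq |A|-1 \quad \text{and} \quad k\Bigl(d - \sum_{i\notin A} a_i\Bigr) \geq 1
\]
are both satisfied. The first inequality is equivalent to $k \geq (|A|-1)/(\sum_i a_i - d)$, and the assumption (\ref{generalveronese}) guarantees the denominator is at least $1$, so a smallest such $k$ exists.

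The next step is to dispose of the second inequality. Since $P_A \in \Ass^\infty(I)$, Proposition~\ref{stableassveronese} gives $P_A \in V^*(I)$, and the argument in the proof of Proposition~\ref{stableassveronese} shows $\sum_{i\notin A} a_i \leq d-1$. Hence $d - \sum_{i\notin A} a_i \geq 1$ and the second inequality holds for every $k \geq 1$. Combined with the persistence property (Proposition~\ref{persistence}), which guarantees that once $P_A$ enters $\Ass(I^k)$ it stays there, we conclude that the index of stability of $P_A$ equals $\lceil (|A|-1)/(\sum_i a_i - d)\rceil$, as claimed.

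For the formula for $\astab(I)$, I would observe that $\astab(I)$ is the maximum of the indices of stability of the individual primes $P_A \in \Ass^\infty(I)$. Since the expression $\lceil (|A|-1)/(\sum_i a_i - d)\rceil$ is monotone non-decreasing in $|A|$, the maximum is attained at the largest possible $|A|$. By Proposition~\ref{stableassveronese} we have $\Ass^\infty(I) = V^*(I)$, and in particular $\mm = P_{[n]} \in \Ass^\infty(I)$, so the maximum is attained at $A = [n]$, giving $\astab(I) = \lceil (n-1)/(\sum_i a_i - d)\rceil$. Finally, the bound $\astab(I) \leq n-1$ is immediate from $\sum_i a_i - d \geq 1$, which gives $(n-1)/(\sum_i a_i - d) \leq n-1$, so the ceiling is at most $n-1$. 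The only mildly delicate point in the whole argument is ensuring the second inequality is automatic for every $P_A \in \Ass^\infty(I)$, which is why the intermediate fact from Proposition~\ref{stableassveronese} is essential here.
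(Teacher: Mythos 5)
Your proposal is correct and follows essentially the same route as the paper: you apply Lemma~\ref{basicproperties}(a) and Proposition~\ref{assveronese} to $I^k=I_{kd;ka_1,\ldots,ka_n}$, observe that the second inequality is $k$-independent and automatic for $P_A\in\Ass^{\infty}(I)$ (via the argument in Proposition~\ref{stableassveronese}), and solve the first inequality for the least $k$, obtaining the ceiling formula. Your reduction of $\astab(I)$ to the index of stability of $\mm=P_{[n]}$ via persistence and monotonicity in $|A|$ is exactly the observation the paper makes, just spelled out in slightly more detail.
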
	

\begin{proof}
Let $k$ be the smallest integer such that $P_A\in\Ass(I^k)$. Then we have that $P_A\in\Ass(I^k)\setminus\Ass(I^{k-1})$. Therefore, by applying Lemma~\ref{basicproperties} and Proposition~\ref{assveronese} this is equivalent to saying that the following inequalities are fulfilled
\[
k(\sum_{i=1}^n a_i-d)\geq |A|-1 > (k-1)(\sum_{i=1}^n a_i-d) \quad \text{ and } \quad \sum_{i\not\in A} a_i\leq d-1.
\]
The first two inequalities imply the desired formula for the index of stability of $P_A$. For the second equality, it is enough to observe that $\astab(I)$ is equal to the index of stability of $\mm=P_{[n]}$. The last inequality of the statement is obvious.
\end{proof}

The upper bound given in Corollary~\ref{primeindex} is sharp since for every integer $n\geq 2$ the ideal of Veronese type $I=I_{n-1;1,\ldots,1}$ has $\astab(I)=n-1$. Moreover, for an ideal of Veronese type $I$ satisfying (\ref{generalveronese}) we have that  $\astab(I_{d;a_1,\ldots,a_n})=n-1$ if and only if $\sum_{i=1}^n a_i=d+1$. In addition, it follows from the discussion of the third particular case (before Proposition~\ref{assveronese}) that, for a fixed integer $k$ with $1\leq k\leq n-1$ the ideal of Veronese type $I=I_{d;a_1,\ldots,a_k,0,\ldots,0}\subset S$ with $\sum_{i=1}^k a_i=d+1$ satisfies $\astab(I)=k$. Therefore the index of stability of a Veronese type ideal can be any integer between  $1$ and $n-1$.

\medskip
It follows from \cite[Theorem 3.3]{HH1} and Lemma~\ref{basicproperties} that for an ideal $I$ of Veronese type we can compute $\depth S/I$, the limit depth and $\dstab I$. More precisely, we have

\begin{Corollary}
\label{dstabveronese}
Let $I=I_{d;a_1,\ldots,a_n}\subset S$ be an ideal of Veronese type satisfying {\em(\ref{generalveronese})}. Then we have
\begin{enumerate}
\item[(a)] $\depth S/I=\max\{0,d+n-1-\sum_{i=1}^n a_i\}$;
\item[(b)] $\depth S/I^k=\max\{0,kd+n-1-\sum_{i=1}^n ka_i\}$;
\item[(c)] $\dstab(I)=\lceil\frac{n-1}{\sum_{i=1}^n a_i-d}\rceil$.
\end{enumerate}
In particular,  $\astab(I)=\dstab(I)$, $\lim_{k\to \infty}\depth S/I^k=0$ and $\ell(I)=n$.
\end{Corollary}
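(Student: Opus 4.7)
The plan is to treat the three parts as essentially one computation anchored on \cite[Theorem 3.3]{HH1}, which gives the depth of $S/J$ in closed form for any ideal $J$ of Veronese type. Applied to $J=I$, it should yield part (a) directly. For part (b), the key structural input is Lemma~\ref{basicproperties}(a): the power $I^k$ equals the Veronese type ideal $I_{kd;ka_1,\ldots,ka_n}$. Plugging this into the formula from (a) immediately gives $\depth S/I^k=\max\{0,kd+n-1-k\sum_{i=1}^n a_i\}$, without any further work.

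For part (c), I would use the hypothesis (\ref{generalveronese}), which guarantees $\sum_{i=1}^n a_i - d>0$. Consequently the linear function $k\mapsto kd+n-1-k\sum_{i=1}^n a_i$ is strictly decreasing, so by (b) the sequence $\depth S/I^k$ strictly decreases until it hits $0$ and then stays there. The depth stabilizes at the smallest $k$ for which $k(\sum_{i=1}^n a_i-d)\geq n-1$, namely $\lceil (n-1)/(\sum_{i=1}^n a_i-d)\rceil$. This is the claimed value of $\dstab(I)$.

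For the three "in particular" assertions, the equality $\astab(I)=\dstab(I)$ is a direct comparison of the formula just obtained with the closed expression for $\astab(I)$ in Corollary~\ref{primeindex}, which produces the identical ceiling. The vanishing $\lim_{k\to\infty}\depth S/I^k=0$ is simply the value of $\depth S/I^k$ for $k\geq \dstab(I)$ given by (b). Finally, since any polymatroidal (hence any Veronese type) ideal has normal Rees algebra, Corollary~\ref{limitdepth} applies and gives $\lim_{k\to\infty}\depth S/I^k=n-\ell(I)$; combined with the previous statement this forces $\ell(I)=n$.

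The only real obstacle, if one declined to cite \cite[Theorem 3.3]{HH1}, would be proving (a) from scratch. One would then need to compute $\depth S/I_{d;a_1,\ldots,a_n}$ intrinsically, perhaps by combining the explicit description of $\Ass(I)$ in Proposition~\ref{assveronese} with a minimal free resolution argument, or by identifying when $\mm \in \Ass(I)$ (which by Proposition~\ref{assveronese} happens precisely when $\sum a_i \geq d-1+n$) and using localization and induction on $n$. Invoking the cited theorem bypasses this and lets the rest of the corollary fall out mechanically.
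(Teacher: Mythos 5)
Your proposal is correct and follows essentially the same route as the paper: (a) is quoted from \cite[Theorem 3.3]{HH1}, (b) follows from (a) together with Lemma~\ref{basicproperties}(a), (c) is read off from the strictly decreasing linear depth function, and the final equalities come from comparing with Corollary~\ref{primeindex} and invoking Corollary~\ref{limitdepth}. The paper's proof is just a terser version of exactly this argument.
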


\begin{proof}
(a) was observed in \cite[Theorem 3.3]{HH1}. Notice that (b) follows at once from (a) and Lemma~\ref{basicproperties}(a). Finally, one can immediately obtain that (b) implies (c). The last equalities are obvious.
\end{proof}

\section{Acknowledgments}

This paper was partially written during the visit of the second and third author at the Universit\"at Duisburg-Essen, Campus Essen. The second author wants to thank The Abdus Salam International Centre for Theoretical Physics (ICTP), Trieste, Italy  for supporting her. The third author was supported by a Romanian grant awarded by UEFISCDI, project number $83/2010$, PNII-RU code TE$\_46/2010$, program Human Resources, ``Algebraic modeling of some combinatorial objects and computational applications''.

\end{document}